\documentclass[a4paper,10pt]{article}
\usepackage[utf8]{inputenc}
\usepackage[T1]{fontenc}
\usepackage{lmodern}
\usepackage{amsfonts}
\usepackage{a4wide}
\setcounter{tocdepth}{3}
\usepackage{geometry}
\usepackage{graphicx}
\usepackage{amssymb, amsmath, amsopn, amstext, amscd, amsthm}
\usepackage{color}
\usepackage{version}
\usepackage{todonotes}
\usepackage{xcolor}

\newtheorem{definition}{Definition}
\newtheorem{proposition}{Proposition}
\newtheorem{theorem}{Theorem}
\newtheorem{lm}{Lemma}
\newtheorem{remark}{Remark}

\DeclareMathOperator*{\argmin}{argmin}
\DeclareMathOperator*{\argmax}{argmax}

\def\argmin{\mathop{\rm argmin}}

\newcommand{\1}{{\bf 1}}
\newcommand{\card}{{\rm card}}

\begin{document}

\title{Confidence sets with expected sizes for\\ 
Multiclass Classification}
 \author{Christophe Denis\footnote{Christophe.Denis@u-pem.fr}  \, and Mohamed Hebiri\footnote{Mohamed.Hebiri@u-pem.fr}\\
\small{ LAMA, UMR-CNRS 8050,}\\
\small{Universit\'{e} Paris Est -- Marne-la-Vall\'ee}
}
 \date{}
\maketitle

\begin{abstract}
Multiclass classification problems such as image annotation can involve a large number
of classes. In this context, confusion between classes can occur, and single label classification may be misleading.
We provide in the present paper a general device that,
given an unlabeled dataset and a score function defined as the minimizer of some empirical and convex risk,
outputs a set of class labels, instead of a single one.
Interestingly, this procedure does not require that the unlabeled dataset explores the whole classes.
Even more, the method is calibrated to control the expected size of the output set while minimizing the classification risk.
We show the statistical optimality of the procedure and establish rates of convergence under the Tsybakov margin condition.
It turns out that these rates are linear on the number of labels.
We apply our methodology to convex aggregation of confidence sets based on the $V$-fold cross validation principle also known as the superlearning principle~\cite{vanderLaanSL}. 
We illustrate the numerical performance of the procedure on real data and demonstrate in particular that with moderate expected size, w.r.t. the number of labels, the procedure provides significant improvement of the classification risk.
\\
 \vspace*{0.25cm} 
\noindent {\it Keywords} : Multiclass classification, confidence sets, empirical risk minimization,

\vspace*{-0.3cm} 

\qquad\qquad  cumulative distribution functions, convex loss, superlearning.

\end{abstract}

\section{Introduction}
\label{Sec:Intro}

The advent of high-throughput technology has generated tremendous amounts of large and high-dimensional classification data. This allows classification at unprecedented 
scales with hundreds or even more classes. The standard approach to classification in the multiclass setting is to use a classification rule for assigning a single label. More specifically, it consists in assigning a single label $Y \in \mathcal{Y}$, with $ \mathcal{Y}= \{1, \ldots, K\}$, to a given input example $X \in \mathcal{X}$ among a 
collection of labels. However, while a large number of classes can lead to precise characterizations of data points, similarities among classes also bear the risk of 
confusion and misclassification. Hence, assigning a single label can lead to wrong or ambiguous results.  

In this paper, we address this problem by introducing an approach that yields sets of labels as outputs, namely, confidence sets.
A confidence set $\Gamma$ is a function that maps $\mathcal{X}$ onto $2^{\mathcal{Y}}$.
A natural way to obtain a set of labels is to use 
ranked outputs of the classification rule.  For example, one could take the classes that correspond to the top-level conditional probabilities $\mathbb{P}(Y=\cdot|X =x)$. 
Here, we provide a more general approach where we control the {\it expected} size of the confidence sets. For a confidence set $\Gamma$, the expected size of $\Gamma$ is defined as
$\mathbb{E}[|\Gamma(X)|]$, where $|\cdot|$ stands for the cardinality.
For a sample $X$ and  given  an expected set size $\beta$,
we provide an algorithm that outputs a set $\hat{\Gamma}(X)$ such that $\mathbb{E}[|\hat{\Gamma}(X)|] \approx \beta$.
Furthermore, the procedure aims at minimizing the classification error given by
\begin{equation*}
\mathbb{P}\left(Y \notin \hat{\Gamma}(X)\right) \approx   \min_{{\scriptscriptstyle\Gamma \, : \, \mathbb{E}[|\Gamma(X)|] = \beta}} \mathbb{P}\left(Y \notin 
\Gamma(X)\right)  = \mathcal{R}^{*}_{\beta}.
\end{equation*}
We establish a close formula of the oracle confidence set~$\Gamma^* =  \argmin_{{\scriptscriptstyle\Gamma \, : \, \mathbb{E}[|\Gamma(X)|] = \beta}} \mathbb{P}\left(Y \notin  \Gamma(X)\right)$ that involves the cumulative distribution functions of the conditional probabilities. 
Besides, we formulate a data-driven counterpart of $\Gamma^{*}$ based on the minimization of the empirical risk.
However, the natural risk function in the multiclass setting is non convex, and then minimizing it is proving to be a computational issue.
As a remedy, convex surrogates are often used in machine learning, and more specifically in classification as
reflected by the popularity of methods such as Boosting~\cite{Freund_Schapire97}, logistic regression~\cite{Friedman_Hastie_Tibshirani00}
and support vector machine~\cite{VapnikLivre}.
In our problem this translates by considering some convex surrogate of the $0/1$-loss in $\mathbb{P}\left(Y \notin \Gamma(X)\right)$; 
we introduce a new convex risk function which enables us to emphasize specific aspects of confidence sets.  
That convex risk function is partly inspired by the works in~\cite{Zhang04, BartlettJordanMcauliffe06, YW10} that deal with binary classification.

Our approach displays two main features. 
First, our method can be implemented in a semi-supervised way [Vap98].
More precisely, the method is a two-steps procedure that requires the estimation of score functions (as minimizers of some convex risk function) in a first step and the estimation of the cumulative distribution of these scores in a second one. Hence, the first step requires labeled data whereas the second one involves only unlabeled samples. 
Notably, the unlabeled sample does not necessary consists of examples coming from the whole classes. This aspects is fondamental when we deal with a large number of classes, some of which been sparsely represented. 
Second, from the theoretical point of view, we provide an oracle inequality satisfied by $\hat{\Gamma}$,
the empirical confidence set that results from the minimization of an empirical convex risk. 
The obtained oracle inequality shall enable us to derive rates of convergence
on a particular class of confidence sets under the Tsybakov noise assumption on the data generating distribution.
These rates are linear in the number of classes $K$ and also depend on the regularity of the cumulative distribution functions of the conditional probabilities.

An obvious benefit of considering convex risk minimization is when we deal with aggregation. However, aggregating confidence sets is no simple matter. Another contribution of the present paper is about applying the above methodology to aggregation of confidence sets. More specifically, we provide a generalization of the superlearning algorithm~\cite{vanderLaanSL}
initially introduced in the context of regression and binary classification.
This algorithm relies on the $V$-fold cross-validation principle. We prove the consistency of this aggregation procedure and illustrate 
its relevance on real datasets.
Let us point out that any arbitrary library of machine learning algorithms may be used in this aggregation procedure; we propose in the present paper to exploit support vector machines, random forest procedures or softmax regression since these methods are popular in machine learning and that each of them is associated to a different shape.

We end up this discussion by highlighting in two words what we perceive as being the main contributions of the present paper. We describe an optimal strategy for building confidence sets in multiclass classification setting,
and we derive a new aggregation procedure for confidence sets that still allows controlling the expected size of the resulting confidence set.

\bigskip

\noindent {\it Related works:} 
The closest learning task to the present work is {\it classification with reject option} which is a particular setting in binary classification.
Several papers fall within the scope of this area~\cite{Ch70,HW06,YW10,WY11,LeiConfTheori,DH_ConfSet_15} and differ from each other by the goal they consider. 
Among these references, our procedure is partially inspired by the paper~\cite{DH_ConfSet_15} that also considers a semi-supervised approach to build confidence sets invoking some cumulative distribution functions (of the conditional probabilities themselves in their case).
The similarity is however limited to the definition of oracle confidence sets, the oracle confidence in the present paper being an extension of the one defined in~\cite{DH_ConfSet_15} to the multiclass setting. On the other hand, all the data-driven considerations are completely different (in particular, \cite{DH_ConfSet_15} focuses on plug-in rules) and importantly, we develop here new probabilistic results on sums of cumulative distribution functions of random variables, that are of own interest.

Assigning a set of labels instead of a single one for an input example is not new~\cite{Vovk_livre,WLW04,CDB09,LRW13,CCB_16}. 
One of the most popular methods is based on {\it Conformal Prediction} approach~\cite{Vovk99IntroCP,Vovk02AsymptVal,Vovk_livre}.
In the multiclass classification framework, the goal of this algorithm is to build the smallest set
of labels such that its classification error is below a pre-specified level.
Since our procedure aims at minimizing the classification error while keeping under control the size of the set, {\it Conformal Prediction} can be seen as a dual of our method.
It is worth mentioning that conformal predictors approaches need two labeled datasets where we only need one labeled dataset, the second being unlabeled.
We refer to the very interesting statistical study of {\it Conformal Predictors} in the binary case in the paper~\cite{LeiConfTheori}.

\bigskip

\noindent {\it Notation:} First, we state general notation. Let $\mathcal{Y} = \{1,\ldots,K\}$, 
with $K \geq 2$ being an integer.
Let $(X ,Y)$  be the generic data-structure taking its values in $\mathcal{X} \times \mathcal{Y}$
with distribution $\mathbb{P}$.
The goal in classification is to predict the label $Y$ given an observation of $X$.
This is performed based on a classifier (or classification rule) $s$ which is a function mapping $\mathcal{X}$ onto 
$\mathcal{Y}$. Let $\mathcal{S}$ be the set of all classifiers.
The misclassification risk $R$ associated with $s \in \mathcal{S}$ is defined as
\begin{equation*}
R(s) = \mathbb{P}(s(X) \neq Y).
\end{equation*} 
Moreover, the minimizer of $R$ over $\mathcal{S}$ is the Bayes classifier, denoted by $s^*$, and is characterized by
\begin{equation*}
s^{*}(\cdot) = \argmax_{k \in \mathcal{Y}} p_{k}(\cdot),
\end{equation*}
where $p_{k}(x) = \mathbb{P}(Y = k | X = x)$ for $x \in \mathcal{X}$ and $k \in \mathcal{Y}$.
\\
Let us now consider more specific notation related to the multiclass confidence set setting. 
Let a confidence set be any measurable function that maps $\mathcal{X}$
onto $2^{\mathcal{Y}}$. 
Let $\Gamma$ be a confidence set. This confidence set is characterized by two attributes.
The first one is the risk associated to the confidence set
\begin{equation}
	\label{Sec1:RiskConfidenceSet}
		\mathcal{R}\left(\Gamma\right)  =  \mathbb{P}\left(Y \notin \Gamma(X)\right),
\end{equation}
and is related to its accuracy.
The second attribute is linked to the information given by the confidence set.
It is defined as
\begin{equation}
	\label{Sec1:ExpConfidenceSet}
\mathcal{I}(\Gamma) =  \mathbb{E}\left(|\Gamma(X)|\right),
 \end{equation}
where $|\cdot|$ stands for the cardinality. 
Moreover, for some $\beta \in [1,K]$, we say that, for two confidence sets $\Gamma$ and $\Gamma '$ such 
that 
$\mathcal{I}\left(\Gamma \right)=\mathcal{I}\left(\Gamma '\right) = \beta$, the confidence set $\Gamma$ 
is ``better'' than $\Gamma '$
if $\mathcal{R}\left(\Gamma \right) \leq \mathcal{R}\left(\Gamma '\right)$.

\bigskip

\noindent {\it Organization of the paper:} 
The rest of the paper is organized as follows. 
Next section is devoted to the definition and the main properties
of the oracle confidence set for multiclass classification.
The empirical risk minimization procedure is provided in 
Section~\ref{Sec:EmpRisk}. 
Rates of convergence for the confidence set that results from this minimization can also be found in this section.
We present an application of our procedure to  aggregation of confidence sets in Section~\ref{Sec:numRes}.
We finally draw some conclusions and present perspectives of our work in Section~5. Proofs of
our results are postponed to the Appendix.

\section{Confidence set for multiclass classification}
\label{Sec:confSet}
In the present section, we define a class of confidence sets that are suitable for multiclass classification and referred as {\it Oracle $\beta$-sets}.
For some $\beta \in (0,K)$, these sets are shown to be optimal according to the risk~\eqref{Sec1:RiskConfidenceSet}
with an information~\eqref{Sec1:ExpConfidenceSet} equal to $\beta$. 
Moreover, basic but fondamental properties of Oracle $\beta$-sets can be found in Proposition~\ref{prop:propG},
while Proposition~\ref{prop:prop2} provides another interpretation of these sets.

\subsection{Notation and definition}
\label{subsec:confSetDef}

First of all, we introduce in this section a class of confidence sets that specifies oracle confidence sets.
Let $\beta\in(0,K)$ be a desired information level.
The so-called {\it Oracle $\beta$-sets} are optimal according to the risk~\eqref{Sec1:RiskConfidenceSet} among all the 
confidence sets $\Gamma $ such that $\mathcal{I}(\Gamma)  = \beta$.
Throughout the paper we make the following assumption
{\it 
\begin{itemize}
\item[{\bf (A1)}] For all $k \in \{1,\ldots,K\}$, the cumulative distribution function $F_{p_k}$ of $p_k(X)$ is continuous.
\end{itemize}
}
The definition of the {\it Oracle $\beta$-set} relies on the continuous and decreasing function $G$ defined for any $t\in 
[0,1]$ by
\begin{equation*}
G(t) = \sum_{k=1}^K \bar{F}_{p_k}(t),
\end{equation*}
where for any $k\in \{1,\ldots, K\}$, we denote by $ \bar{F}_{p_k}$ the tail distribution function of $p_k(X)$, that 
is, $ \bar{F}_{p_k}= 1- F_{p_k}$ with $ F_{p_k}$ being the cumulative distribution function (c.d.f.) of $p_k(X)$.
The generalized inverse $G^{-1}$ of $G$ is given by (see \cite{VanderVaartLivre}):
\begin{equation*}
G^{-1}(\beta) = \inf\{t\in [0,1] : \ G(t) \leq \beta \}, \;\; \forall \beta \in (0,K).
\end{equation*}
The functions $G$ and $G^{-1}$ are central in the construction of the Oracle $\beta$-sets. We then provide some of their useful properties in the following proposition.
\begin{proposition}
The following properties on $G$ hold
\label{prop:propG}
\begin{itemize}
\item[$i)$] For every $t \in (0,1)$ and $\beta \in (0,K)$, $G^{-1}(\beta) \leq t \Leftrightarrow \beta \geq G(t)$.
\item[$ii)$]For every $\beta \in (0,K)$, $G(G^{-1}(\beta)) = \beta$.
\item[$iii)$] Let $\varepsilon$ be a random variable, independent of $X$, and distributed from a uniform distribution on 
$\{1, \ldots, K\}$
and let $U$ be uniformly distributed on $[0,K]$.
Define
\begin{equation*}
Z = \sum_{k = 1}^K p_k(X) \1_{\{\varepsilon = k\}}.
\end{equation*}
If the function $G$ is continuous, 
then $G(Z)  \overset{\mathcal{L}}{=} U$ and $G^{-1}(U)  \overset{\mathcal{L}}{=} Z$.  
\end{itemize} 
\end{proposition}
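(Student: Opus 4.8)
The plan is to first extract the shape of $G$ from assumption (A1), then read off $i)$ and $ii)$ from the calculus of generalized inverses of continuous monotone functions, and finally deduce $iii)$ by a probability–integral–transform computation. I would begin by recording that $G$ is continuous (by (A1)) and non-increasing (each $\bar F_{p_k}$ is), with $G(1)=\sum_{k}\mathbb{P}(p_k(X)>1)=0$. The point that genuinely uses (A1) is that $G(0)=K$: continuity of $F_{p_k}$ at $0$ forces $\mathbb{P}(p_k(X)=0)=F_{p_k}(0)=0$, hence $\bar F_{p_k}(0)=1$, and summing over $k$ gives $G(0)=K$; likewise continuity of $F_{p_k}$ at $1$ gives $\mathbb{P}(p_k(X)=1)=0$.

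\emph{Parts $i)$ and $ii)$.} Fix $\beta\in(0,K)$ and set $A_\beta=\{t\in[0,1]:G(t)\le\beta\}$. Continuity makes $A_\beta$ closed; it is nonempty since $G(1)=0<\beta$; and since $G$ is non-increasing it is upward closed in $[0,1]$. Hence $A_\beta=[a,1]$ with $a=\min A_\beta=\inf A_\beta=G^{-1}(\beta)$, which immediately yields $i)$: for $t\in(0,1)$, $G^{-1}(\beta)\le t\Leftrightarrow t\in A_\beta\Leftrightarrow G(t)\le\beta$. For $ii)$, note $a\in A_\beta$ gives $G(a)\le\beta$, while $G(0)=K>\beta$ forces $a>0$; if one had $G(a)<\beta$, continuity of $G$ at $a$ would produce some $t\in(0,a)$ with $G(t)<\beta$, contradicting $a=\min A_\beta$. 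Therefore $G(G^{-1}(\beta))=\beta$, and the same comparison with $G(1)=0$ shows in passing that $G^{-1}(\beta)\in(0,1)$.

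\emph{Part $iii)$.} Using independence of $\varepsilon$ and $X$ together with $\mathbb{P}(\varepsilon=k)=1/K$, for every $t\in[0,1]$
\[
\mathbb{P}(Z>t)=\sum_{k=1}^K\mathbb{P}(\varepsilon=k)\,\mathbb{P}(p_k(X)>t)=\frac1K\sum_{k=1}^K\bar F_{p_k}(t)=\frac{G(t)}{K},
\]
so $Z$ has the continuous c.d.f. $1-G/K$ and, by the previous step, $Z\in(0,1)$ almost surely. Fix $u\in(0,K)$. On the almost-sure event $\{Z\in(0,1)\}$, part $i)$ gives $\{G(Z)\le u\}=\{Z\ge G^{-1}(u)\}$, so using that $Z$ has no atoms and then $ii)$,
\[
\mathbb{P}(G(Z)\le u)=\mathbb{P}\bigl(Z>G^{-1}(u)\bigr)=\frac{G(G^{-1}(u))}{K}=\frac{u}{K},
\]
which identifies the law of $G(Z)$ as the uniform law on $[0,K]$, i.e. $G(Z)\overset{\mathcal{L}}{=}U$. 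Symmetrically, for $t\in(0,1)$, part $i)$ gives (on the almost-sure event $\{U\in(0,K)\}$) $\mathbb{P}(G^{-1}(U)\le t)=\mathbb{P}(U\ge G(t))=1-G(t)/K=\mathbb{P}(Z\le t)$, whence $G^{-1}(U)\overset{\mathcal{L}}{=}Z$.

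\emph{Main obstacle.} The only delicate point is the identity $G(G^{-1}(\beta))=\beta$: it genuinely relies on $G(0)=K$ (without it the identity would fail for $\beta$ exceeding $G(0)$), so I would be careful to spell out why continuity of the $F_{p_k}$ rules out an atom of $p_k(X)$ at $0$. Everything else is the standard bookkeeping of generalized inverses of a continuous non-increasing function together with the fact that the continuous law of $Z$ carries no atoms.
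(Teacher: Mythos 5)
Your proof is correct and follows essentially the same route as the paper: part $iii)$ is the content of the paper's Lemma~1 (the law of $Z$ is the uniform mixture, so $\mathbb{P}(Z>t)=G(t)/K$, followed by the probability integral transform), only phrased directly in terms of the decreasing $G$ rather than the normalized increasing c.d.f. $H=1-G/K$ that the paper uses. You additionally spell out the generalized-inverse bookkeeping for $i)$--$ii)$, which the paper leaves implicit inside Lemma~1's proof, and you correctly identify that (A1) is what guarantees $G(0)=K$ (no atom of $p_k(X)$ at $0$), without which $ii)$ would fail for $\beta>G(0)$.
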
 
The proof of Proposition~\ref{prop:propG} relies on Lemma~\ref{lemme:lm1} in the Appendix.
Now, we are able to defined the Oracle $\beta$-set:
\begin{definition}\label{def:OracleSet}
	Let  $\beta \in (0,K)$, the Oracle $\beta$-set is given by
	\begin{eqnarray*}
		\Gamma_\beta^* (X)  & =  & \left\{ k \in \{1,\ldots, K\} : \ G(p_{k}(X)) \leq \beta  \right\} \\
		                   & = & \left\{ k \in \{1,\ldots, K\} : \ p_{k}(X)\geq G^{-1}(\beta)  \right\}.
    \end{eqnarray*}
\end{definition}
This definition of the Oracle $\beta$-set is intuitive and can be related to the binary classification with reject 
option setting~\cite{Ch70,HW06,DH_ConfSet_15} in the following way: a label $k$ is assigned to the Oracle $\beta$-set if the probability $p_k(X)$ is 
large enough. It is worth noting that the function $G$ plays the same role as the c.d.f. of the score 
function in~\cite{DH_ConfSet_15}.
As emphasized by Proposition~\ref{prop:propG}, their introduction allows to control exactly the information~\eqref{Sec1:ExpConfidenceSet}.
Indeed, it follows from the definition of the Oracle $\beta$-set that for each $\beta \in (0,K)$ 
\begin{equation*}
|\Gamma_{\beta}^*(X)| = \sum_{k=1}^K \1_{\{p_k(X) \geq G^{-1}(\beta)\}},
\end{equation*}
and then $\mathcal{I}(\Gamma_\beta^*) = \mathbb{E}\left[  |\Gamma_{\beta}^*(X)|   \right]=G(G^{-1}{(\beta)})$.
Therefore, Proposition~\ref{prop:propG} ensures that 
\begin{equation*}
\mathcal{I}(\Gamma_\beta^*) = \beta.
\end{equation*}
This last display points out that the Oracle $\beta$-sets are indeed $\beta$-level (that is, its information equals $\beta$).
In the next section, we focus on the study of the risk of these oracle confidence sets.

\begin{remark}
Naturally, the definition of Oracle $\beta$-sets can be extended to any $\beta \in  [0,K]$. 
However, the limit cases $\beta=0$ and $\beta=K$ are of limited interest and are completely trivial. 
We then exclude these two limit cases from the present study.
\end{remark}

\subsection{Properties of the oracle confidence sets}
\label{subsec:confSetProp}

Let us first state the optimality of the Oracle $\beta$-set:
\begin{proposition}
	\label{prop:prop1}
	Let Assumption (A1) be satisfied. For any $\beta\in (0,K)$, we have both:
		\begin{enumerate}
			\item The Oracle $\beta$-set $\Gamma_\beta^*$ satisfies the following property:
\begin{equation*}
\mathcal{R}\left( \Gamma_\beta^* \right) =  \inf_{\Gamma } \mathcal{R}\left(\Gamma\right),
\end{equation*}	
where the infimum is taken over all $\beta$-level confidence sets. 
			\item For any $\beta$-level confidence set $\Gamma$, the following holds
\begin{equation}
\label{eq:decompRisk}
0\leq \mathcal{R}\left(\Gamma\right) - \mathcal{R}\left(\Gamma_\beta^*\right)  
=   
\mathbb{E}\left[ \sum_{k\in (\Gamma_\beta^*(X) \, \Delta  \, \Gamma(X))}
\left| p_{k}(X) - G^{-1}(\beta) \right| \right],
\end{equation}	
where the symbol $\Delta$ stands for the symmetric difference of two sets, that is, for two subsets $A$ and $B$ 
of $\{1,\ldots,K\}$, we write $A \, \Delta \,  B = (A \setminus B) \cup (B\setminus A)$. 
\end{enumerate}
\end{proposition}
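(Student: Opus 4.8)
The plan is to derive everything from the single identity in part~2, because part~1 is an immediate corollary: the right-hand side of \eqref{eq:decompRisk} is manifestly non-negative, and $\Gamma_\beta^*$ is itself a $\beta$-level confidence set — this is precisely the computation recorded just after Definition~\ref{def:OracleSet}, namely $\mathcal{I}(\Gamma_\beta^*)=G(G^{-1}(\beta))=\beta$, which uses Proposition~\ref{prop:propG}$(ii)$. Hence $\Gamma_\beta^*$ is a legitimate competitor in the infimum and attains it.

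First I would rewrite the two attributes of a confidence set in terms of the conditional probabilities. Conditioning on $X$ and using $\sum_{k=1}^K p_k(X)=1$ gives $\mathcal{R}(\Gamma)=1-\mathbb{E}\big[\sum_{k\in\Gamma(X)}p_k(X)\big]$, while $\mathcal{I}(\Gamma)=\mathbb{E}\big[\sum_{k=1}^K\1_{\{k\in\Gamma(X)\}}\big]$; since $K$ is finite and all integrands lie in $[0,1]$, there is no integrability issue. Consequently, for any $\beta$-level confidence set $\Gamma$,
\[
\mathcal{R}(\Gamma)-\mathcal{R}(\Gamma_\beta^*)=\mathbb{E}\Big[\sum_{k=1}^K p_k(X)\big(\1_{\{k\in\Gamma_\beta^*(X)\}}-\1_{\{k\in\Gamma(X)\}}\big)\Big].
\]

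The crux is then to subtract a well-chosen multiple of the vanishing difference of cardinalities, a Lagrangian-type trick. Set $\lambda=G^{-1}(\beta)$. Because $\mathcal{I}(\Gamma)=\mathcal{I}(\Gamma_\beta^*)=\beta$, we have $\mathbb{E}\big[\sum_{k=1}^K\lambda(\1_{\{k\in\Gamma_\beta^*(X)\}}-\1_{\{k\in\Gamma(X)\}})\big]=\lambda(\beta-\beta)=0$, so I may insert it and obtain
\[
\mathcal{R}(\Gamma)-\mathcal{R}(\Gamma_\beta^*)=\mathbb{E}\Big[\sum_{k=1}^K (p_k(X)-\lambda)\big(\1_{\{k\in\Gamma_\beta^*(X)\}}-\1_{\{k\in\Gamma(X)\}}\big)\Big].
\]
Next I would argue pointwise in $k$ and $X$. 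The summand is zero unless $k\in\Gamma_\beta^*(X)\,\Delta\,\Gamma(X)$. If $k\in\Gamma_\beta^*(X)\setminus\Gamma(X)$, the indicator difference is $+1$ and, by the second description of the Oracle $\beta$-set in Definition~\ref{def:OracleSet}, $p_k(X)\geq G^{-1}(\beta)=\lambda$, so the term equals $p_k(X)-\lambda=|p_k(X)-\lambda|$; if $k\in\Gamma(X)\setminus\Gamma_\beta^*(X)$, the indicator difference is $-1$ and $p_k(X)<G^{-1}(\beta)=\lambda$, so the term equals $\lambda-p_k(X)=|p_k(X)-\lambda|$. In both cases the summand is exactly $|p_k(X)-G^{-1}(\beta)|$, which gives \eqref{eq:decompRisk}, and non-negativity is then evident.

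I do not foresee a genuine obstacle. The only points needing care are bookkeeping ones: the manipulation presupposes that $\Gamma_\beta^*$ is $\beta$-level, which relies on the continuity of $G$ — hence on Assumption~(A1) — through Proposition~\ref{prop:propG}; and one should note that membership in $\Gamma_\beta^*(X)$ is defined by the closed condition $p_k(X)\geq G^{-1}(\beta)$, so the two cases above are genuinely exhaustive on the symmetric difference and no null-set argument is required. Assumption~(A1) thus enters only to guarantee that $\Gamma_\beta^*$ realizes information exactly $\beta$.
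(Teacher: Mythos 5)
Your proof is correct and follows essentially the same route as the paper: both condition on $X$, exploit $\mathcal{I}(\Gamma)=\mathcal{I}(\Gamma_\beta^*)=\beta$ to insert the zero term $G^{-1}(\beta)\big(\mathcal{I}(\Gamma_\beta^*)-\mathcal{I}(\Gamma)\big)$, and then use the threshold characterization $k\in\Gamma_\beta^*(X)\Leftrightarrow p_k(X)\geq G^{-1}(\beta)$ to identify each summand on the symmetric difference with $|p_k(X)-G^{-1}(\beta)|$. The only difference is presentational: the paper routes the same cancellation through a double sum over the events $\{|\Gamma(X)|=k,\,|\Gamma_\beta^*(X)|=j\}$, whereas your version dispenses with that bookkeeping.
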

Several remarks can be made from Proposition~\ref{prop:prop1}. First, for $\beta \in (0,K)$, the 
Oracle $\beta$-set is optimal for the misclassification risk, over the class of $\beta$-level confidence sets.
Moreover, the excess risk of any $\beta$-level confidence set relies on the behavior of the score functions $p_k$ around 
the threshold $G^{-1}(\beta)$.
Finally, we can note that if $K = 2$ and $\beta = 1$, which implies that $G^{-1}(\beta) = 1/2$, Equation~\eqref{eq:decompRisk}  
reduces to the misclassification excess risk in binary classification.
\begin{remark}
\label{rmk:max}
One way to build a confidence set $\Gamma$ with information $\beta$ is to set $\Gamma$ as the $\beta$ top levels conditional probabilities.
In the sequel this method is referred as the $\texttt{max}$ procedure.
This strategy is natural but actually suboptimal as shown by the first point of Proposition~\ref{prop:prop1}. As an illustration, 
we consider a simulation 
scheme with $K = 10$ classes.
We generate $(X,Y)$ according to a mixture model. More precisely,
\begin{itemize}
\item[i)] the label $Y$ is distributed from a uniform distribution on $\{1, \ldots, K\}$;
\item[ii)] conditional on $Y = k$, the feature $X$ is generated according to a multivariate gaussian distribution
with mean parameter $\mu_k \in \mathbb{R}^{10}$ and identity covariance matrix. For each $k =1, \ldots, K$, the vectors $\mu_k$ are i.i.d realizations of uniform distribution on
$[0,4]$.
\end{itemize}
For $\beta = 2$ we evaluate the risks of the Oracle $\beta$-set and the $\texttt{max}$ procedure and obtain respectively
$0.05$  and $0.09$ (with very small variance).  
\end{remark}
\begin{remark}
An important motivation behind the introduction of confidence sets and in particular of Oracle $\beta$-sets is that they might outperform the Bayes rule which can be seen as the Oracle $\beta$-set associated to $\beta=1$. 
This gap in performance is even larger when the number of classes $K$ is large and there is a big confusion between classes.
Such improvement will be illustrated in the numerical study (see Section~\ref{subsec:realData}).
\end{remark}

We end up this section by providing another characterization of the Oracle $\beta$-set.
\begin{proposition}
\label{prop:prop2}
For $t \in [0,1]$, and $\Gamma$ a confidence set, we define
\begin{equation*}
L_t(\Gamma) = \mathbb{P}\left(Y  \notin \Gamma(X)\right) + t \mathcal{I}(\Gamma).
\end{equation*}
For $\beta \in [0,K]$, the following equality holds: 
\begin{equation*}
L_{G^{-1}(\beta)}( \Gamma_\beta^*) = \min_{\Gamma} L_{G^{-1}(\beta)}(\Gamma).
\end{equation*}
\end{proposition}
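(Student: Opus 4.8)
The plan is to prove the pointwise (conditional on $X$) version of the optimality and then integrate. Fix $\beta \in (0,K)$ and write $t = G^{-1}(\beta)$. For any confidence set $\Gamma$, one has
\[
L_t(\Gamma) = \mathbb{E}\left[ 1 - \sum_{k \in \Gamma(X)} p_k(X) + t\, |\Gamma(X)| \right]
= \mathbb{E}\left[ 1 + \sum_{k \in \Gamma(X)} \bigl( t - p_k(X) \bigr) \right].
\]
So the question reduces to minimizing, for each fixed $x$, the quantity $\sum_{k \in \Gamma(x)} (t - p_k(x))$ over all subsets $\Gamma(x) \subseteq \{1,\ldots,K\}$. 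This is a trivial combinatorial optimization: to minimize a sum over a chosen subset, one includes exactly those indices $k$ for which the summand $t - p_k(x)$ is negative, i.e.\ those with $p_k(x) > t$, and one may freely include or exclude the indices with $p_k(x) = t$ without changing the value. Hence the set $\{k : p_k(x) \ge t\} = \{k : p_k(x) \ge G^{-1}(\beta)\}$, which is exactly $\Gamma_\beta^*(x)$ by Definition~\ref{def:OracleSet}, is a minimizer for every $x$.

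Combining the two displays, for every confidence set $\Gamma$ and every $x$,
\[
1 + \sum_{k \in \Gamma_\beta^*(x)} \bigl( t - p_k(x) \bigr) \le 1 + \sum_{k \in \Gamma(x)} \bigl( t - p_k(x) \bigr),
\]
and taking expectations over $X$ yields $L_t(\Gamma_\beta^*) \le L_t(\Gamma)$. Since $\Gamma$ was arbitrary, $L_{G^{-1}(\beta)}(\Gamma_\beta^*) = \min_\Gamma L_{G^{-1}(\beta)}(\Gamma)$, which is the claim. For the boundary cases $\beta = 0$ and $\beta = K$ one argues directly (the minimizer being the empty set and the full set $\{1,\ldots,K\}$ respectively), or simply notes they are excluded as per the Remark; in any event the same pointwise argument applies with $t = G^{-1}(0)$ or $t = G^{-1}(K)$ interpreted as the relevant endpoint.

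There is essentially no obstacle here: the result is a Lagrangian/Neyman--Pearson-type statement and the work is entirely in the algebraic rewriting $\mathbb{P}(Y \notin \Gamma(X)) = 1 - \mathbb{E}[\sum_{k \in \Gamma(X)} p_k(X)]$, which follows from conditioning on $X$ and the definition $p_k(x) = \mathbb{P}(Y=k \mid X=x)$. The only point worth stating carefully is that ties ($p_k(x) = t$ on a set of positive probability) do not affect the minimal value, which is why $\Gamma_\beta^*$ — defined with a non-strict inequality — is \emph{a} minimizer even though it need not be the unique one; this is consistent with, and indeed underlies, the excess-risk formula~\eqref{eq:decompRisk} of Proposition~\ref{prop:prop1}, where the difference is controlled by $|p_k(X) - G^{-1}(\beta)|$ on the symmetric difference.
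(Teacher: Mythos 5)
Your proof is correct. The paper itself omits a proof of Proposition~\ref{prop:prop2}, stating only that it ``relies on the same arguments as those of Proposition~\ref{prop:prop1}''; those arguments proceed by computing the excess risk $\mathcal{R}(\Gamma)-\mathcal{R}(\Gamma^*_\beta)$, using the cardinality constraint $\mathbb{E}[|\Gamma(X)|]=\mathbb{E}[|\Gamma^*_\beta(X)|]$ to inject the threshold $G^{-1}(\beta)$, and reorganizing over the symmetric difference $\Gamma^*_\beta(X)\,\Delta\,\Gamma(X)$. Your route is genuinely more direct: because $L_t$ already contains the penalty $t\,\mathcal{I}(\Gamma)$, there is no constraint to enforce, so the rewriting $L_t(\Gamma)=\mathbb{E}\bigl[1+\sum_{k\in\Gamma(X)}(t-p_k(X))\bigr]$ reduces everything to an unconstrained pointwise subset selection, solved by including exactly the indices with $p_k(x)\geq t$. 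This buys a shorter argument that sidesteps the double-indexing over $|\Gamma(X)|=k$, $|\Gamma^*_\beta(X)|=j$ needed in Proposition~\ref{prop:prop1}, and it makes transparent both the non-uniqueness at ties $p_k(x)=t$ and the link to the excess-risk formula~\eqref{eq:decompRisk}, since $L_t(\Gamma)-L_t(\Gamma^*_\beta)=\mathbb{E}\bigl[\sum_{k\in\Gamma^*_\beta(X)\,\Delta\,\Gamma(X)}|p_k(X)-t|\bigr]$ falls out of the same display. The price is that it only applies to the penalized functional $L_t$, whereas the paper's decomposition is the one actually needed for the constrained statement of Proposition~\ref{prop:prop1}. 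One small caveat: the proposition is stated for $\beta\in[0,K]$ while $G^{-1}$ is only defined on $(0,K)$; your remark on the endpoints is the right way to handle this, and the interior case is the only one of substance.
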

The proof of this proposition relies on the same arguments as those of Proposition~\ref{prop:prop1}.
It is then omitted.
Proposition~\ref{prop:prop2} states that the Oracle $\beta$-set is defined as the minimizer, over all confidence sets $\Gamma$, of the risk function $L_{t}$ when the tuning 
parameter $t$ is set equal to $G^{-1}(\beta)$.
Note moreover that the risk function $L_t$ is a trade-off, controlled by the parameter $t$, between the risk of a confidence set on the one hand, and the information provided 
by this confidence set on the other hand.
Hence, the risk function $L_t$ can be viewed as a generalization to the multiclass case of the risk function 
provided in~\cite{Ch70, HW06} for binary classification with reject option setting.

\section{Empirical risk minimization}
\label{Sec:EmpRisk}

In this section we introduce and study confidence sets which rely on the
minimization of convex risks. Their definitions and main properties are given in Sections~\ref{subsec:phiRisk}-\ref{subsec:confSetCalib}. 
As a consequence, we deduce a data-driven procedure described in Section~\ref{subsec:dataDriven} with
several theoretical properties, such as rates of convergence, that we demonstrate in Section~\ref{subsec:rateOfConvergence}.

\subsection{$\phi$-risk}
\label{subsec:phiRisk}

Let $f = (f_1, \ldots, f_K):\mathcal{X} \rightarrow \mathbb{R}^{K}$ be a score function and $G_{f}(.) = \sum_{k = 1}^K \bar{F}_{f_{k}}(.)$.
Assuming that the function $G_{f}$ is continuous and given an information level $\beta \in (0,K)$,
there exists $\delta \in \mathbb{R}$, such that $G_{f}(-\delta) = \beta$. Given this simple but important fact, we define
the confidence set $\Gamma_{f,\delta}$ associated to $f$ and $\delta$ as 
\begin{equation}
\label{eq:fconfSet} 
\Gamma_{f,\delta}(X) = \{k \; : \; f_k(X) \geq -\delta\}.
\end{equation}
In this way, the confidence set $\Gamma_{f,\delta}$ consists of top scores, and the threshold $\delta$ is fixed so that $\mathcal{I}\left(\Gamma_{f,\delta}\right) = \beta$.
As a consequence, we naturally aim at solving the problem
\begin{equation*}
\min_{f \in \mathcal{F}}\mathcal{R}(\Gamma_{f,\delta}),
\end{equation*}
where $\mathcal{F}$ is a class of functions.
Due to computational issues, it is convenient to focus on a convex surrogate of the previous minimization problem.
To this end, let $\phi: \mathbb{R} \rightarrow \mathbb{R}$ be a convex function. 
We define the $\phi$-risk of $f$ by
\begin{equation}
\label{eq:phirisk}
R_{\phi}\left(f\right)  =  \mathbb{E}\left[\sum_{k = 1}^K \phi(Z_k f_k(X))\right],
\end{equation}
where $Z_k = 2 \; \1_{\{Y = k\}}-1$ for all $k = 1, \ldots, K$.
Therefore, our target score becomes
\begin{equation*}
\bar{f} \in \argmin_{f \in \mathcal{F}} R_{\phi}\left(f\right),
\end{equation*}
for the purpose of building the confidence $\Gamma_{\bar{f}, \delta}$.
In the sequel, we also introduce $f^*$, the minimizer over the class of all measurable functions, of the $\phi$-risk. 
The notation suppresses the dependence on $\phi$.
It is worth mentioning at this point that the definition of the risk function $R_{\phi}$ is dictated by Equation~\eqref{eq:decompRisk} and suits for confidence sets. Moreover, this function differs from the classical risk function used in the multiclass setting (see~\cite{BartlettTewari}). The reason behind this is that building a confidence set is closer to $K$ binary classification problems.

\subsection{Classification calibration for confidence sets}
\label{subsec:confSetCalib}
Convexification of the risk in classification is a standard technique.
In this section we adapt classical results and tools to confidence sets in the multiclass setting.
We refer the reader to earlier papers as~\cite{Zhang04,BartlettJordanMcauliffe06,YW10} for interesting developments.

One of the main important concept when we deal with convexification of the risk is the notion of calibration of the loss function $\phi$. 
This property  permits to connect confidence sets deduced from the convex risk and from the classification risk.
\begin{definition}
\label{def:calibration}
We say that the function $\phi$ is confidence set calibrated if for all $\beta > 0$, there exists $\delta^{*} \in 
\mathbb{R}$ 
such that 
\begin{equation*}
\Gamma_{f^{*},\delta^{*}} = \Gamma_{\beta}^{*},
\end{equation*}
with $f^*$ being the minimizer of the $\phi$-risk 
\begin{equation*}
f^*  \in \argmin_{f} R_{\phi}\left(f\right),
\end{equation*}
where the infimum is taken over the class of all measurable functions.
Hence, the confidence set based on $f^*$
coincides with the Bayes confidence set. 
\end{definition}
Given this, we can state the following proposition that gives a characterization of the confidence set calibration property in terms of the function $G$.
\begin{proposition}
\label{prop:classCal}
The function $\phi$ is confidence set calibrated if and only if for all $\beta \in (0,K)$, 
there exists $\delta^{*} \in \mathbb{R}$ 
such that
$\phi'(\delta^{*})$ and $\phi'(-\delta^{*})$ both exists, $\phi'(\delta^{*}) <0$, $\phi'(-\delta^{*}) <0$ and
\begin{equation*}
G^{-1}(\beta) = \dfrac{\phi'(\delta^{*})}{\phi'(\delta^{*})+\phi'(-\delta^{*})} \; ,
\end{equation*}
where $\phi'$ denotes the derivative of $\phi$.
\end{proposition}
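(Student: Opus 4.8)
The plan is to reduce the $\phi$-risk minimization to a one-parameter family of scalar convex problems, rewrite the calibration requirement as a pointwise condition on the minimizers, and then extract the derivative identity from first-order optimality. First I would note that, conditioning on $X$ and using $\E[\1_{\{Y=k\}}\mid X]=p_k(X)$,
\begin{equation*}
R_\phi(f)=\E\Big[\sum_{k=1}^{K}H_{p_k(X)}\big(f_k(X)\big)\Big],\qquad H_p(a):=p\,\phi(a)+(1-p)\,\phi(-a),
\end{equation*}
where each $H_p$ is convex. The objective decouples over $k$ and over $x$, so a minimizer $f^{*}$ is obtained by setting, for each $x$ and $k$, $f_k^{*}(x)\in\argmin_{a}H_{p_k(x)}(a)$; equivalently $f_k^{*}(x)=g\big(p_k(x)\big)$ for a measurable selection $g(p)$ of the interval-valued map $p\mapsto\argmin_{a}H_p(a)$. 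Hence $\Gamma_{f^{*},\delta}(X)=\{k:\,g(p_k(X))\ge-\delta\}$, while $\Gamma^{*}_{\beta}(X)=\{k:\,p_k(X)\ge G^{-1}(\beta)\}$ by Definition~\ref{def:OracleSet}.

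Next I would reformulate calibration. Fix $\beta\in(0,K)$ and write $t:=G^{-1}(\beta)$; by Assumption (A1) and Proposition~\ref{prop:propG}, $t\in(0,1)$, $G(t)=\beta$, and $\Pro(p_k(X)=t)=0$ for each $k$, so ties are negligible. The equality $\Gamma_{f^{*},\delta^{*}}=\Gamma^{*}_{\beta}$ (as confidence sets) means $g(p_k(X))\ge-\delta^{*}\Leftrightarrow p_k(X)\ge t$ almost surely. Since calibration must hold for every data distribution satisfying (A1), we may let the marginals of the $p_k(X)$ concentrate near prescribed levels, whereby this is equivalent to the pointwise statement that $\argmin_{a}H_p(a)$ lies weakly to the right of $-\delta^{*}$ for every $p>t$ and strictly to the left of $-\delta^{*}$ for every $p<t$. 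Letting $p\to t^{\pm}$ and invoking stability of minimizers for the (locally uniformly convergent) convex family $H_p$, this in turn forces $-\delta^{*}\in\argmin_a H_t(a)$ with the argmin map ``crossing'' the level $-\delta^{*}$ exactly at $p=t$. Thus $\phi$ is confidence set calibrated iff, for every $\beta\in(0,K)$, such a $\delta^{*}$ exists; this is the natural shift of binary classification calibration, which corresponds to $K=2$, $\beta=1$, $\delta^{*}=0$ and threshold $1/2$.

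I would then do the first-order computation. Assuming $\phi$ differentiable at $\delta^{*}$ and $-\delta^{*}$, $H_p$ is differentiable at $-\delta^{*}$ with $H_p'(-\delta^{*})=p\big(\phi'(\delta^{*})+\phi'(-\delta^{*})\big)-\phi'(\delta^{*})$, an affine function of $p$. By convexity, $-\delta^{*}\in\argmin H_t$ iff $H_t'(-\delta^{*})=0$, which rearranges to
\begin{equation*}
G^{-1}(\beta)=t=\frac{\phi'(\delta^{*})}{\phi'(\delta^{*})+\phi'(-\delta^{*})}.
\end{equation*}
The crossing condition demands $H_p'(-\delta^{*})<0$ for $p>t$ and $>0$ for $p<t$, i.e. the slope $\phi'(\delta^{*})+\phi'(-\delta^{*})$ is negative; together with $t\in(0,1)$, the identity then forces $\phi'(\delta^{*})$ and $\phi'(-\delta^{*})$ both to be negative, as they share the sign of their negative sum. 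Running this backwards proves the ``if'' direction outright: given the hypotheses one defines $\delta^{*}$, checks the crossing condition, and concludes $\Gamma_{f^{*},\delta^{*}}=\Gamma^{*}_{\beta}$.

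The hard part will be the converse, namely showing that calibration forces $\phi$ to be differentiable at $\delta^{*}$ and $-\delta^{*}$. The obstruction is that a kink of $\phi$ at $\pm\delta^{*}$ makes $0\in\partial H_p(-\delta^{*})$ — i.e. $-\delta^{*}$ a minimizer of $H_p$ — for all $p$ in a nondegenerate interval $I\ni t$, since $0\in p\,\partial\phi(-\delta^{*})-(1-p)\,\partial\phi(\delta^{*})$ then holds on an interval of $p$'s; the argmin map stagnates at the level $-\delta^{*}$ across $I$ instead of crossing it at a single point, so $\delta^{*}$ can witness only the (at most two) levels $\beta$ with $G^{-1}(\beta)\in\partial I$. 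My plan is to exploit that a convex $\phi$ has at most countably many kinks, and that $\argmin H_t$ must genuinely vary with $t$ (two levels sharing the same minimizing interval would need one $\delta^{*}$ to reproduce two incompatible oracle sets), so as to select, at every $\beta$, a witness $-\delta^{*}$ that is a two-sided differentiability point of $\phi$; the computation above then applies. Making this regularity argument precise — disentangling kinks, flat minimizing intervals of $H_p$, and the requirement of matching a continuum of distinct oracle sets — is what I expect to be delicate; the derivative identity itself is immediate once differentiability is secured.
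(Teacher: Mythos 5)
Your reduction of the $\phi$-risk to the conditional scalar problems $H_p(a)=p\,\phi(a)+(1-p)\,\phi(-a)$, the decoupling over $k$, and the first-order analysis at $a=-\delta^{*}$ giving $H_p'(-\delta^{*})=p\bigl(\phi'(\delta^{*})+\phi'(-\delta^{*})\bigr)-\phi'(\delta^{*})$ is exactly the route the paper intends: it gives no proof of its own and defers to Theorem~1 of~\cite{YW10} ``with minor modifications'', and that theorem is proved precisely this way. Your sign analysis (negativity of the sum from the crossing condition, then negativity of each derivative from $G^{-1}(\beta)\in(0,1)$, which does hold under (A1)) is correct, and the ``if'' direction of your argument is essentially complete.

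The ``only if'' direction, however, is a plan rather than a proof, and this is where the real content of the statement lies. You correctly identify the obstruction — a kink of $\phi$ at $\pm\delta^{*}$ makes $0\in\partial H_p(-\delta^{*})$ on a nondegenerate interval of $p$, so the argmin map stagnates instead of crossing — but the argument you sketch to rule it out is not carried through, and it is not clear it closes as stated: the witness $\delta^{*}$ is pinned down by the identity itself, so you cannot simply relocate it to one of the co-countably many differentiability points of $\phi$; and when $\argmin_a H_p(a)$ is a nontrivial interval, $f^{*}$ is not unique, so you must say for which selection $g$ the equality $\Gamma_{f^{*},\delta^{*}}=\Gamma_{\beta}^{*}$ is required to hold (this is exactly why the hinge loss fails to be calibrated here, and any complete proof must be able to detect that failure). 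A second, smaller gap is the passage from the almost-sure statement $g(p_k(X))\geq-\delta^{*}\Leftrightarrow p_k(X)\geq G^{-1}(\beta)$ to the pointwise statement for every $p$ in a neighbourhood of $t$: you justify it by varying the distribution, but $G^{-1}(\beta)$ itself depends on the distribution, so the class of distributions over which calibration is quantified (and with $t$ held fixed) needs to be made explicit. Both points are resolved in the proof of Theorem~1 of~\cite{YW10}; as written, your converse does not yet establish the proposition.
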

The proof of the proposition follows the lines of Theorem~1 in~\cite{YW10} with minor modifications.
These characterizations of calibration for confidence sets generalize the notion of calibration in the classification setting as well as the necessary and sufficient condition for the minimizer of the $\phi$-risk to be calibrated. Indeed, if we pick $\delta=0$ in Definition~\ref{def:calibration} and Proposition~\ref{prop:classCal} we exactly come back to the calibration property in the classical classification setting (see~\cite{BartlettJordanMcauliffe06}).
Note that commonly used loss functions as boosting ($x\mapsto \exp(-x)$), least squares ($x \mapsto (x-1)^2$) and logistic ($x \mapsto \log(1+\exp(-x))$)
are examples of calibrated losses (see for instance~\cite{BartlettJordanMcauliffe06,WY11}).
Now, for some score function $f$ and some real number $\delta$ such that $G_{f}(-\delta) = \beta$,  we define the excess risk 
\begin{equation*}
\Delta\mathcal{R}(\Gamma_{f,\delta}) = \mathcal{R}(\Gamma_{f,\delta}) - \mathcal{R}(\Gamma^{*}_{\beta}), 
\end{equation*}
and the excess $\phi$-risk 
\begin{equation*}
\Delta R_{\phi}(f) = R_{\phi}\left(f\right)-R_{\phi}\left(f^{*}\right).
\end{equation*}
We also introduce the marginal conditional excess $\phi$-risk on $f=(f_1,\ldots,f_K)$ as
\begin{equation*}
\Delta R^{k}_{\phi}(f(X)) = p_k(X)(\phi(f_k(X))-\phi(f_k^*(X))) + (1-p_k(X))(\phi(-f_k(X))-\phi(-f_k^*(X))),
\end{equation*} 
for $k=1,\ldots,K$.
The following theorem shows that the consistency in terms of $\phi$-risk implies the consistency in terms of classification risk $\mathcal{R}$.
\begin{theorem}
\label{theo:theo1}
Assume that $\phi$ is confidence set calibrated and assume that there exists constants $C > 0$ and $s \geq 1$ such 
that\footnote{With abuse of notation, we write $ \Delta R_{\phi}^k(-\delta^{*})$ instead of  $\Delta R_{\phi}^k((-\delta^{*},\ldots,-\delta^*))$ since no confusion can occur.}
\begin{equation}
\label{eq:Zhansgg} 
|p_k(X) - G^{-1}(\beta)|^s \leq C \Delta R_{\phi}^k(-\delta^{*}).
\end{equation} 
Let $\hat{f}_n$ be a sequence of scores. We assume that for each $n$, the function $G_{\hat{f}_n}$ is continuous.
Let $\delta_n\in \mathbb{R}$ be such that $G_{\hat{f}_n}(-\delta_n) =\beta$, then
\begin{equation*}
\Delta R_{\phi}(\hat{f}_n) \overset{P}{\rightarrow} 0 \qquad \Rightarrow \qquad \Delta \mathcal{R}(\Gamma_{\hat{f}_n,\delta_n}) 
\overset{P}{\rightarrow} 0.
\end{equation*}
\end{theorem}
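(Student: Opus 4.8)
The plan is to connect the three excess quantities in a chain: first control $\Delta\mathcal{R}(\Gamma_{\hat f_n,\delta_n})$ by a sum over $k$ of integrals of $|p_k(X)-G^{-1}(\beta)|$ near the threshold, then bound each such term by the marginal conditional excess $\phi$-risk via the low-noise hypothesis~\eqref{eq:Zhansgg}, and finally observe that the sum of the marginal conditional excess $\phi$-risks is exactly (or is dominated by) $\Delta R_\phi(\hat f_n)$, which tends to $0$ in probability by assumption. The delicate point is that the threshold in the data-driven set is $-\delta_n$ (chosen so that $G_{\hat f_n}(-\delta_n)=\beta$), whereas the oracle uses $G^{-1}(\beta)$ in terms of the true $p_k$, and the calibration constant $\delta^*$ appears in~\eqref{eq:Zhansgg} at a third, fixed point; reconciling these is the main obstacle (see below).

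First I would apply the decomposition~\eqref{eq:decompRisk} from Proposition~\ref{prop:prop1}, which gives
\begin{equation*}
\Delta\mathcal{R}(\Gamma_{\hat f_n,\delta_n}) = \mathbb{E}\!\left[\sum_{k\in \Gamma_\beta^*(X)\,\Delta\,\Gamma_{\hat f_n,\delta_n}(X)} \bigl| p_k(X)-G^{-1}(\beta)\bigr|\right].
\end{equation*}
The key geometric observation is that $k$ lies in the symmetric difference precisely when $p_k(X)$ is on one side of $G^{-1}(\beta)$ while $\hat f_{n,k}(X)$ is on the corresponding side of $-\delta_n$; since $G_{\hat f_n}(-\delta_n)=\beta=G(G^{-1}(\beta))$, on that event $|p_k(X)-G^{-1}(\beta)|$ is comparable to the ``misranking gap'' of the $k$-th coordinate. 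I would then bound the summand on this event by $|p_k(X)-G^{-1}(\beta)|^{1/s}$-type quantities (using that $|p_k-G^{-1}(\beta)|\le 1$ and $s\ge 1$, so $|p_k-G^{-1}(\beta)|\le |p_k-G^{-1}(\beta)|^{1/s}$ is false in general — instead I use $|p_k-G^{-1}(\beta)| = |p_k-G^{-1}(\beta)|^{1-1/s}\cdot|p_k-G^{-1}(\beta)|^{1/s}$ and bound the first factor by $1$), so that
\begin{equation*}
\Delta\mathcal{R}(\Gamma_{\hat f_n,\delta_n}) \le \sum_{k=1}^K \mathbb{E}\!\left[ \1_{\{k\in\Gamma_\beta^*(X)\,\Delta\,\Gamma_{\hat f_n,\delta_n}(X)\}}\, \bigl|p_k(X)-G^{-1}(\beta)\bigr|^{1/s}\right],
\end{equation*}
and then invoke~\eqref{eq:Zhansgg} to replace $|p_k(X)-G^{-1}(\beta)|^s$ by $C\,\Delta R_\phi^k(-\delta^*)$. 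Here I must be careful: \eqref{eq:Zhansgg} is stated at the fixed point $-\delta^*$, but on the symmetric-difference event I would rather need the pointwise excess $\Delta R_\phi^k(\hat f_{n,k}(X))$; I would use convexity of $\phi$ together with the calibration identity of Proposition~\ref{prop:classCal} (which pins down $\delta^*$ so that the minimizer $f^*_k$ of the marginal conditional $\phi$-risk corresponds to the threshold $G^{-1}(\beta)$) to argue that on the event $\{k\in\Gamma_\beta^*\,\Delta\,\Gamma_{\hat f_n,\delta_n}\}$ one has $\Delta R_\phi^k(\hat f_{n,k}(X)) \ge c\,\Delta R_\phi^k(-\delta^*)$ for a suitable constant, so the hypothesis transfers. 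A Hölder/Jensen step (applied to each of the $K$ expectations, since each indicator–weighted expectation is over a probability measure of mass $\le 1$) then yields
\begin{equation*}
\Delta\mathcal{R}(\Gamma_{\hat f_n,\delta_n}) \le C'\sum_{k=1}^K \bigl(\mathbb{E}[\Delta R_\phi^k(\hat f_n(X))]\bigr)^{1/s} \le C' K^{1-1/s}\bigl(\Delta R_\phi(\hat f_n)\bigr)^{1/s},
\end{equation*}
using $\sum_k \mathbb{E}[\Delta R_\phi^k(\hat f_n(X))] = \Delta R_\phi(\hat f_n)$ (immediate from the definitions of $R_\phi$ and the marginal conditional excess $\phi$-risk, conditioning on $X$) and concavity of $t\mapsto t^{1/s}$.

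The conclusion is then immediate: since $\Delta R_\phi(\hat f_n)\xrightarrow{P}0$ and $t\mapsto C'K^{1-1/s}t^{1/s}$ is continuous with value $0$ at $0$, the continuous mapping theorem gives $\Delta\mathcal{R}(\Gamma_{\hat f_n,\delta_n})\xrightarrow{P}0$. I expect the main obstacle to be the bookkeeping around the three distinct thresholds ($-\delta_n$ data-driven, $G^{-1}(\beta)$ oracle in $p$-space, $-\delta^*$ the calibration point): one needs the continuity assumptions on $G$ and $G_{\hat f_n}$ together with Proposition~\ref{prop:classCal} to guarantee these line up well enough that the pointwise margin condition~\eqref{eq:Zhansgg} can be applied coordinatewise on the symmetric-difference event. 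A secondary technical point is justifying that $\Delta R_\phi^k(\hat f_n(X))\ge 0$ pointwise (so that the $1/s$-power and Jensen steps are legitimate), which follows because $f^*_k$ minimizes the marginal conditional $\phi$-risk for each value of $p_k(X)$.
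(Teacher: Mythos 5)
Your overall architecture (decomposition~\eqref{eq:decompRisk}, transfer to the marginal conditional excess $\phi$-risk via~\eqref{eq:Zhansgg}, Jensen in $k$ to get a factor $K^{1-1/s}$, then continuous mapping) matches the first half of the paper's argument, and the identity $\sum_k \mathbb{E}[\Delta R_\phi^k(\hat f_n(X))]=\Delta R_\phi(\hat f_n)$ and the pointwise nonnegativity of $\Delta R_\phi^k$ are both correct. But the step you yourself flag as the main obstacle is where the proof breaks: the claimed inequality $\Delta R_\phi^k(\hat f_{n,k}(X))\geq c\,\Delta R_\phi^k(-\delta^{*})$ does \emph{not} hold on all of the symmetric-difference event. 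The convexity argument (monotonicity of the convex map $u\mapsto p_k\phi(u)+(1-p_k)\phi(-u)$ away from its minimizer $f_k^{*}(X)$) only gives $\Delta R_\phi^k(\hat f_{n,k})\geq \Delta R_\phi^k(-\delta^{*})$ when $\hat f_{n,k}(X)$ and $f_k^{*}(X)$ sit on opposite sides of $-\delta^{*}$. Since membership in $\Gamma_{\hat f_n,\delta_n}$ is decided by the \emph{data-driven} threshold $-\delta_n\neq-\delta^{*}$, the index $k$ can lie in the symmetric difference while $\hat f_{n,k}(X)$ falls strictly between $-\delta_n$ and $-\delta^{*}$, hence on the \emph{same} side of $-\delta^{*}$ as $f_k^{*}(X)$; there $\Delta R_\phi^k(\hat f_{n,k})$ can be arbitrarily small (even zero) while $|p_k(X)-G^{-1}(\beta)|$ is not, and no constant $c$ rescues the inequality.

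The paper's proof isolates exactly this bad event, $B_k=\{\hat f_{n,k}\ \text{between}\ -\delta_n\ \text{and}\ -\delta^{*}\}$, runs your argument only on $B_k^{c}$, and handles the $B_k$ part by a completely different mechanism: $\sum_k\mathbb{P}(B_k)\leq \mathbb{E}\left[|G_{f^{*}}(-\delta^{*})-G_{\hat f_n}(-\delta^{*})|\right]$ (using $G_{\hat f_n}(-\delta_n)=\beta=G_{f^{*}}(-\delta^{*})$), which is then controlled by a $\gamma$-splitting, Markov's inequality on $\Delta R_\phi^k$, and the continuity of the c.d.f.'s of the $p_k(X)$ from Assumption (A1). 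That last term, $\sum_k\mathbb{P}_X\left(|p_k(X)-G^{-1}(\beta)|\leq\gamma^{1/s}\right)$, tends to $0$ only as $\gamma\to0$ and carries no rate without a margin condition; this is why Theorem~\ref{theo:theo1} asserts only convergence in probability while rates are deferred to Theorem~\ref{theo:vitesse1} under ${\rm M_{\alpha}^k}$. Your proposed clean bound $\Delta\mathcal{R}(\Gamma_{\hat f_n,\delta_n})\leq C'K^{1-1/s}(\Delta R_\phi(\hat f_n))^{1/s}$ is therefore strictly stronger than what the theorem's hypotheses permit, which is a reliable sign that the transfer step cannot be made to work as stated. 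To repair the proposal, introduce the events $B_k$, keep your argument on $B_k^{c}$, and add the separate continuity-based control of $\sum_k\mathbb{P}(B_k)$.
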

The theorem ensures that the convergence in terms of $\phi$ risk implies 
the convergence in terms of risk $\mathcal{R}$. 
This convergence is made possible since we manage, in the proof, to bound the excess risk by (a power of) the excess $\phi$-risk. The assumption needed in this theorem is also standard and is for instance satisfied for the boosting, least square and logistic losses with the parameter $s$ being equal to $2$ (see~\cite{BartlettJordanMcauliffe06}).

\subsection{Data-driven procedure}
\label{subsec:dataDriven}

In this section we provide the steps of the construction of our empirical confidence set that is deduced from the empirical risk minimization. Before going into further details, let us first mention that our procedure is semi-supervised in the sense that it requires two datasets, one of which being unlabeled. Hence
 we introduce a first data set $\mathcal{D}_n = \{(X_i,Y_i), \; i = 1, \ldots, n\}$, 
which consists of independent copies of $(X,Y)$. 
We define the empirical $\phi$-risk associated to a score function~$f$ (which is the empirical counterpart of $R_\phi$ given in~\eqref{eq:phirisk}):
\begin{equation}
\label{eq:eqEmpPhiRisk}
\hat{R}_{\phi}(f) = \dfrac{1}{n} \sum_{i = 1}^n \sum_{k = 1}^K \phi(Z^i_k f_k(X_i)),
\end{equation}
where $Z^i_k = 2 \; \1_{\{Y_i = k\}}-1$ for all $k = 1, \ldots, K$.
We also define the empirical risk minimizer over $\mathcal{F}$, a convex set of score functions, as
\begin{equation*}
\hat{f} = \arg\min_{f \in \mathcal{F}} \hat{R}_{\phi}(f).
\end{equation*}
At this point, we have in hands the optimal score function $\hat{f}$ and need to build the corresponding confidence set with the right expected size.
However, let us before introduce an intermediate confidence set that would help comprehension since it mimics the oracle $\beta$-set $\Gamma^*$ with regard to its construction using $\hat{f}$ instead of $f^*$.
For this purpose, we define
\begin{equation*}
F_{\hat{f}_k}(t) = \mathbb{P}_{X}\left(\hat{f}_k(X) \leq t | \mathcal{D}_n\right),
\end{equation*}
for $t \in \mathbb{R}$, where $\mathbb{P}_{X}$ is the marginal distribution of $X$.
As for the c.d.f. of $p_k$ and $f_k$, we make the following assumption:
{\it 
\begin{itemize}
\item[{\bf (A2)}] The cumulative distribution functions $F_{\hat{f}_k}$ with $k=1,\ldots,K$ are continuous.
\end{itemize}
}
At this point, we are able to define an empirical approximation of the Oracle $\beta$-set for $\beta \in (0,K)$:
\begin{equation}
\label{eq:eqFirstEstim}
\widetilde{\Gamma}_{\beta}(X) = \left\{ k \in \{1,\ldots, K\} : \ \widetilde{G}(\hat{f}_{k}(X)) \leq \beta  \right\},
\end{equation} 
where for $t \in \mathbb{R}$
\begin{equation}
\label{eq:tildG}
\widetilde{G}(t) = \sum_{k=1}^K   \bar{F}_{\hat{f}_k}(t),
\end{equation}
with $ \bar{F}_{\hat{f}_k}= 1 -  F_{\hat{f}_k}$.
Since the function $\widetilde{G}$ depends on the unknown distribution of $X$, we consider a second but interestingly {\it unlabeled} dataset
$\mathcal{D}_N = \{X_i, i = 1, \ldots, N\}$, independent of $\mathcal{D}_n$ in order to compute the empirical versions of 
the $\bar{F}_{\hat{f}_k}$'s.
By now,  we can define the empirical $\beta$-set based on $\hat{f}$:
\begin{definition}
\label{def:empirConfSet}
Let $\hat{f}$ be the minimizer of the empirical $\phi$-risk given in~\eqref{eq:eqEmpPhiRisk} based on $\mathcal{D}_n$, and consider the unlabeled dataset $\mathcal{D}_N$. Let $\beta \in (0,K)$. 
The empirical $\beta$-set is given by
\begin{equation}
\label{eq:eqEmpConfSet}
\hat{\Gamma}_{\beta}(X) = \left\{ k \in \{1,\ldots, K\} : \ \hat{G}(\hat{f}_{k}(X)) \leq \beta  \right\},
\end{equation}
where 
\begin{equation*}
\hat{G}(.) = \frac{1}{N} \sum_{i = 1}^N \sum_{k = 1}^K \1_{\{\hat{f}_k(X_i) \geq .\}}.
\end{equation*}
\end{definition}
The most important remark about the construction of this data-driven confidence set is that it is made in a semi-supervised way. 
Indeed, the estimation of the tail distribution functions of $\hat{f}_k$ requires only a set of {\it unlabeled} observations.
This is particularly attractive in applications where the number of label observations is small (because labelling examples is time consuming or may be costly) and where one has in hand several unlabeled features that can be used to make prediction more accurate.
As an important consequence is that the estimation of the tail distribution functions of $\hat{f}_k$ does not depend on the number of observations in each class label. That is to say, this unlabeled dataset can be unbalanced with respect to the classes, that can often occur in multiclass classification settings where the number of classes $K$ is quite large. 
Next section deals with the theoretical performance of the empirical $\beta$-sets.

%
%

\subsection{Rates of convergence}
\label{subsec:rateOfConvergence}

In this Section, we provide rates of convergence for the empirical confidence sets defined in 
Section~\ref{subsec:dataDriven}.
First, we state some additional notation.
In the sequel, the symbols $\mathbf{P}$ and $\mathbf{E}$ stand for generic probability and expectation, respectively.
Moreover, given the empirical $\beta$-set $\hat{\Gamma}_{\beta}$ from Definition~\ref{def:empirConfSet}, we consider the risk 
$\mathbf{R}\left(\hat{\Gamma}_{\beta}\right) = \mathbf{P}\left(Y \notin \hat{\Gamma}_{\beta}(X)\right)$ and the 
information $\mathbf{I}\left(\hat{\Gamma}_{\beta}\right) = \mathbf{E}\left[  |  \hat{\Gamma}_{\beta}(X) | \right]$.
Our first result ensures that $\hat{\Gamma}_{\beta}$ is of level $\beta$ up to a term of order~$K/\sqrt{N}$. 
\begin{proposition}
\label{prop:confSetSize}
For each $\beta \in (0,K)$, the following equality holds
\begin{equation*}
\mathbf{I}\left(\hat{\Gamma}_{\beta}\right)   = \beta + O\left(\dfrac{K}{\sqrt{N}}\right).
\end{equation*}
\end{proposition}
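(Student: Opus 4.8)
The plan is to compute $\mathbf{I}(\hat\Gamma_\beta)$ directly from the definition and compare it with $\beta = G(G^{-1}(\beta))$, tracking two sources of error: the replacement of the true tail functions $\bar F_{\hat f_k}$ by their empirical counterparts built from $\mathcal D_N$, and the fact that the threshold $\hat G^{-1}(\beta)$ differs from $\widetilde G^{-1}(\beta)$. First I would note that, conditionally on $\mathcal D_n$, the score functions $\hat f_k$ are fixed, so $\widetilde G$ is the true population quantity $\sum_k \bar F_{\hat f_k}$ and $\hat G$ is its empirical version based on the i.i.d.\ unlabeled sample $\mathcal D_N$. By the intermediate construction, $\widetilde\Gamma_\beta$ is exactly $\beta$-level conditionally on $\mathcal D_n$ (the same argument as for $\Gamma^*_\beta$, using Proposition~\ref{prop:propG} applied to the c.d.f.'s of $\hat f_k(X)$), i.e.\ $\mathbf E[|\widetilde\Gamma_\beta(X)| \mid \mathcal D_n] = \widetilde G(\widetilde G^{-1}(\beta)) = \beta$. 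So it suffices to control $\mathbf E[\,|\,|\hat\Gamma_\beta(X)| - |\widetilde\Gamma_\beta(X)|\,|\,]$.

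Next I would write $|\hat\Gamma_\beta(X)| = \sum_{k=1}^K \1_{\{\hat G(\hat f_k(X)) \le \beta\}}$ and $|\widetilde\Gamma_\beta(X)| = \sum_{k=1}^K \1_{\{\widetilde G(\hat f_k(X)) \le \beta\}}$, and bound the difference of the two indicators by $\1_{\{\hat f_k(X) \text{ lies between the two thresholds}\}}$. Equivalently, it is cleaner to use the reformulation $\hat G(\hat f_k(X)) \le \beta \iff \hat f_k(X) \ge \hat G^{-1}(\beta)$ and similarly for $\widetilde G$, so that
\begin{equation*}
\bigl|\,|\hat\Gamma_\beta(X)| - |\widetilde\Gamma_\beta(X)|\,\bigr| \le \sum_{k=1}^K \1_{\{\hat f_k(X) \in I\}}, \qquad I = [\,\hat G^{-1}(\beta)\wedge \widetilde G^{-1}(\beta),\ \hat G^{-1}(\beta)\vee \widetilde G^{-1}(\beta)\,].
\end{equation*}
Taking $\mathbf E_X[\cdot \mid \mathcal D_n, \mathcal D_N]$ of the right-hand side gives $\sum_k \bigl(F_{\hat f_k}(\widetilde G^{-1}(\beta)\vee\hat G^{-1}(\beta)) - F_{\hat f_k}(\widetilde G^{-1}(\beta)\wedge\hat G^{-1}(\beta))\bigr) = |\widetilde G(\hat G^{-1}(\beta)) - \widetilde G(\widetilde G^{-1}(\beta))| = |\widetilde G(\hat G^{-1}(\beta)) - \beta|$, using monotonicity of $\widetilde G$ and $\widetilde G(\widetilde G^{-1}(\beta)) = \beta$. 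Since $\hat G(\hat G^{-1}(\beta))$ is within $K/N$ of $\beta$ (the jumps of the step function $\hat G$ have size at most $K/N$), it remains to bound $|\widetilde G(\hat G^{-1}(\beta)) - \hat G(\hat G^{-1}(\beta))| \le \sup_{t} |\widetilde G(t) - \hat G(t)|$.

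Finally I would invoke a uniform (Dvoretzky–Kiefer–Wolfowitz type) bound: $\widetilde G - \hat G = \sum_{k=1}^K (\bar F_{\hat f_k} - \widehat{\bar F}_{\hat f_k})$ is a sum of $K$ centered empirical processes, each uniformly of order $N^{-1/2}$ in expectation, so $\mathbf E\bigl[\sup_t |\widetilde G(t) - \hat G(t)| \bigm| \mathcal D_n\bigr] = O(K/\sqrt N)$; here one uses that $\mathcal D_N$ is independent of $\mathcal D_n$ so the $\hat f_k$ act as fixed functions, reducing each term to a genuine empirical c.d.f.\ of the real-valued variable $\hat f_k(X)$. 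Combining the three pieces and taking the outer expectation over $\mathcal D_n$, $\mathcal D_N$ yields $\mathbf I(\hat\Gamma_\beta) = \beta + O(K/\sqrt N)$. The main obstacle is the bookkeeping around the generalized inverses — making sure the comparison between $\hat G^{-1}(\beta)$, $\widetilde G^{-1}(\beta)$ and the values $\hat G(\hat G^{-1}(\beta))$, $\widetilde G(\widetilde G^{-1}(\beta))$ is handled correctly given that $\hat G$ is a step function (so Proposition~\ref{prop:propG}$(ii)$ does not apply verbatim to $\hat G$, only the $K/N$-approximate version does), and checking that no additional continuity assumption beyond (A1)–(A2) is needed for the sandwiching step.
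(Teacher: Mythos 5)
Your argument is correct, but it takes a genuinely different route from the paper's. You both start the same way: the intermediate set $\widetilde\Gamma_\beta$ is exactly $\beta$-level conditionally on $\mathcal D_n$ (via $\widetilde G(\widetilde G^{-1}(\beta))=\beta$ under (A2)), so everything reduces to bounding $\mathbf{E}\bigl[\bigl||\hat\Gamma_\beta(X)|-|\widetilde\Gamma_\beta(X)|\bigr|\bigr]$. From there the paper stays in ``value space'': it bounds the symmetric difference by $\sum_{k}\mathbf{P}\bigl(|\hat G(\hat f_k(X))-\widetilde G(\hat f_k(X))|\geq|\widetilde G(\hat f_k(X))-\beta|\bigr)$ (the two indicators can only disagree if $\beta$ lies between the two values) and controls this sum via Lemma~\ref{lem:lemFondamental}: a dyadic peeling over the size of $|\widetilde G(\hat f_k(X))-\beta|$, where the mass of each shell is bounded using the fact that $\widetilde G(Z)$ is uniform on $[0,K]$ (Lemma~\ref{lemme:lm1}, the ``sum of c.d.f.'s'' trick that delivers linearity in $K$), and the deviation on each shell is bounded by a \emph{pointwise} Hoeffding inequality for $\hat G(\hat f_k(X))$ conditionally on $(\mathcal D_n,X)$. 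You instead pass to ``threshold space'', reduce the error to $|\widetilde G(\hat G^{-1}(\beta))-\beta|$, and invoke a \emph{uniform} Dvoretzky--Kiefer--Wolfowitz bound on $\sup_t|\hat G(t)-\widetilde G(t)|$ plus a $K/N$ discretization term for the step function $\hat G$. Both give $O(K/\sqrt N)$ with linear dependence on $K$ (yours by summing $K$ DKW bounds, the paper's by the uniformity of $\widetilde G(Z)$), and your bookkeeping worries are resolvable: under (A2) the events $\{\hat f_k(X)=\hat G^{-1}(\beta)\}$ are null and the jumps of $\hat G$ are a.s.\ at most $K/N$, so no assumption beyond (A1)--(A2) is needed. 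Your route is shorter and more off-the-shelf; the paper's has the advantage that Lemma~\ref{lem:lemFondamental} is exactly the quantity reused verbatim in the proof of Theorem~\ref{theo:vitesse1} to control $|\mathcal R(\hat\Gamma_\beta)-\mathcal R(\widetilde\Gamma_\beta)|$, and it only ever needs deviation control at the single random point $\hat f_k(X)$ rather than uniformly in $t$.
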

In order to precise rates of convergence for the risk, we need to formulate several assumptions. First, we consider loss functions $\phi$ which have in common that the modulus of convexity of their underlying risk function $R_{\phi}$, defined by
\begin{equation}
\label{eq:deltaMod}
\delta(\varepsilon) = \inf\left\{\frac{R_{\phi}(f)+R_{\phi}(g)}{2} - R_{\phi}\left(\frac{f+g}{2}\right), \;\; \sum_{k=1}^K\mathbb{E}_{X}\left[(f_k-g_k)^2(X)\right] \geq \varepsilon^2\right\},
\end{equation}
satisfies $\delta(\varepsilon)\geq c_1\epsilon^2$ for some $c_1>0$ (we refer to~\cite{BartlettJordanMcauliffe06, MendelsonBartlett} for more details on modulus of convexity for classification risk). 
Moreover, we assume that $\phi$ is classification calibrated and $L$-Lipschitz for $L>0$.
On the other hand, for $\alpha > 0$, we assume a margin condition ${\rm M_{\alpha}^k}$ on each $p_k, k = 1,\ldots,K$.
\begin{equation*}
{\rm M_{\alpha}^k}:  \;\;\;\mathbb{P}_{X}\left(0<|p_k(X)-G^{-1}(\beta)|\leq t\right)\leq c_2t^{\alpha}, \;\;\; \text{for some constant} \;\; c_2>0 \;\; \text{and for all} \;\; t>0.
\end{equation*}
It is important to note that since we assume that the distribution functions of $p_k(X)$ are continuous for each $k$,
we have $\mathbb{P}_{X}\left(0<|p_k(X)-G^{-1}(\beta)|\leq t\right) \rightarrow 0$ with $t \rightarrow 0$. Therefore,
the margin condition only precise the rate of this decay to $0$.
Now, we provide the rate of convergence that we can expect for the empirical confidence sets defined by~\eqref{eq:eqEmpConfSet}.
\begin{theorem}
\label{theo:vitesse1}
Assume that $\|f\|_{\infty} \leq B$ for all $f \in \mathcal{F}$. 
Let $M_n = \mathcal{N}(1/n, L_{\infty}, \mathcal{F})$ be the 
cardinality of the set of closed balls with radius $1/n$ in $L_{\infty}$-norm needed to cover $\mathcal{F}$.
Under the assumptions of Theorem~\ref{theo:theo1}, with the modulos of convexity $\delta(\varepsilon)\geq c_1\epsilon^2$ with $c_1>0$, if $\phi$ is $L$-Lipschitz and if the margin assumptions ${\rm M_{\alpha}^k}$ are satisfied with $\alpha > 0$, then
\begin{equation*}
\mathbf{E}\left[|\Delta \mathcal R(\hat{\Gamma}_{\beta})|\right] \leq C(B,L,s, \alpha) K^{1-\alpha/(\alpha+s)}
\left\{\inf_{f\in \mathcal{F}} \Delta R_{\phi}(f) + \frac{K\log(M_n)}{n} \right\}^{\alpha/(\alpha+s)} + C^{'}\frac{K}{\sqrt{N}},
\end{equation*}
where $C^{'} >0$ is an absolute constant and $C(B,L,s,\alpha) > 0$ is a constant that depends only on $L,B,s$ and $\alpha$.
\end{theorem}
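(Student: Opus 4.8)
The plan is to introduce the intermediate confidence set $\widetilde{\Gamma}_{\beta}$ of~\eqref{eq:eqFirstEstim}, which is built from $\hat{f}$ together with the \emph{exact} tail function $\widetilde{G}=G_{\hat f}$ in place of the empirical $\hat{G}$, and to start from
\[
\bigl|\Delta\mathcal{R}(\hat{\Gamma}_{\beta})\bigr|\le\bigl|\mathbf{R}(\hat{\Gamma}_{\beta})-\mathbf{R}(\widetilde{\Gamma}_{\beta})\bigr|+\Delta\mathcal{R}(\widetilde{\Gamma}_{\beta}).
\]
The first term depends on the data only through the unlabeled sample $\mathcal{D}_N$ (via $\hat{G}$) and will produce the $C'K/\sqrt{N}$ contribution; the second depends only on $\mathcal{D}_n$ (via $\hat{f}$) and will produce the term $K^{1-\alpha/(\alpha+s)}\{\cdots\}^{\alpha/(\alpha+s)}$. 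One bounds the expectation of the two terms separately.

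For the unlabeled term, write $\eta=\sup_{t}|\hat{G}(t)-\widetilde{G}(t)|$. Since $\hat{G},\widetilde{G}$ are monotone and both confidence sets are level sets of these functions at level $\beta$, membership of $k$ in $\hat{\Gamma}_{\beta}(X)\,\Delta\,\widetilde{\Gamma}_{\beta}(X)$ forces $|\widetilde{G}(\hat{f}_{k}(X))-\beta|\le\eta$; bounding the $p_k(X)$'s by $1$ then gives $|\mathbf{R}(\hat{\Gamma}_{\beta})-\mathbf{R}(\widetilde{\Gamma}_{\beta})|\le\sum_{k=1}^{K}\mathbb{P}_{X}(|\widetilde{G}(\hat{f}_{k}(X))-\beta|\le\eta)$. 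Applying the exact analogue of Proposition~\ref{prop:propG}$(iii)$, conditionally on $\mathcal{D}_n$, to the continuous function $G_{\hat f}$ (continuous under (A2)) shows that this sum equals $2\eta$. Finally $\hat{G}-\widetilde{G}$ is a sum of $K$ fluctuations of genuine empirical c.d.f.'s built from $N$ i.i.d.\ points, so the Dvoretzky--Kiefer--Wolfowitz inequality gives $\mathbf{E}[\eta\mid\mathcal{D}_n]\le cK/\sqrt{N}$, hence $\mathbf{E}|\mathbf{R}(\hat{\Gamma}_{\beta})-\mathbf{R}(\widetilde{\Gamma}_{\beta})|\le C'K/\sqrt{N}$.

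For the labeled term, the key point is that $\widetilde{\Gamma}_{\beta}$ is \emph{exactly} $\beta$-level, its information being $G_{\hat f}(G_{\hat f}^{-1}(\beta))=\beta$ by the analogue of Proposition~\ref{prop:propG}$(ii)$, so Proposition~\ref{prop:prop1} applies and gives, conditionally on $\mathcal{D}_n$,
\[
\Delta\mathcal{R}(\widetilde{\Gamma}_{\beta})=\sum_{k=1}^{K}\mathbb{E}_{X}\bigl[\,|p_{k}(X)-G^{-1}(\beta)|\,\1_{\{k\in\Gamma_{\beta}^{*}(X)\,\Delta\,\widetilde{\Gamma}_{\beta}(X)\}}\bigr].
\]
For a free parameter $t>0$ one splits each summand according to $|p_{k}(X)-G^{-1}(\beta)|\le t$ or $>t$: the former part is at most $Kc_{2}t^{\alpha}$ by the margin conditions ${\rm M_{\alpha}^{k}}$ (bounding $|p_k(X)-G^{-1}(\beta)|\le1$), while on the latter one uses $|p_{k}(X)-G^{-1}(\beta)|\le t^{-s}|p_{k}(X)-G^{-1}(\beta)|^{s}$ and the calibration hypothesis~\eqref{eq:Zhansgg} to reduce to $\sum_k\mathbb{E}_X[\Delta R_{\phi}^{k}(-\delta^{*})\,\1_{\{k\in\Gamma_{\beta}^{*}(X)\,\Delta\,\widetilde{\Gamma}_{\beta}(X)\}}]$. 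On the part of the symmetric-difference event where $\hat{f}_{k}(X)$ lies on the correct side of $-\delta^{*}$, the convexity of $a\mapsto p_{k}(X)\phi(a)+(1-p_{k}(X))\phi(-a)$ about its minimiser $f_{k}^{*}(X)$ (together with confidence-set calibration, Proposition~\ref{prop:classCal}, and the monotone link $f_{k}^{*}\leftrightarrow p_k$) gives $\Delta R_{\phi}^{k}(-\delta^{*})\le\Delta R_{\phi}^{k}(\hat{f}_{k}(X))$; the complementary part is contained in an event of $\mathbb{P}_{X}$-probability $O(|\delta_{n}-\delta^{*}|^{\alpha})$ (again by ${\rm M_{\alpha}^{k}}$), where $\delta_n$ is the data-driven threshold $G_{\hat f}(-\delta_n)=\beta$, and $|\delta_{n}-\delta^{*}|$ is in turn controlled through the modulus-of-convexity bound $\Delta R_{\phi}(\hat{f})\ge2c_{1}\sum_k\mathbb{E}_X[(\hat{f}_k-f_k^{*})^2(X)]$ and the new probabilistic estimates on sums of tail c.d.f.'s. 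Altogether this produces $\Delta\mathcal{R}(\widetilde{\Gamma}_{\beta})\le CKt^{\alpha}+Ct^{-s}\Delta R_{\phi}(\hat{f})$.

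It remains to handle the empirical risk minimization and to optimize. A standard localized empirical-process argument for the convex, $L$-Lipschitz, $\mathcal{F}$-bounded loss $\phi$, using the covering number $M_n=\mathcal{N}(1/n,L_{\infty},\mathcal{F})$ and the quadratic lower bound $\delta(\varepsilon)\ge c_1\varepsilon^2$ for the fast rate, gives $\mathbf{E}_{\mathcal{D}_n}[\Delta R_{\phi}(\hat{f})]\le2\inf_{f\in\mathcal{F}}\Delta R_{\phi}(f)+CL^2K\log(M_n)/n=:C\mathcal{E}_n$, the factor $K$ coming from $R_\phi$ being a sum of $K$ marginal risks. Choosing $t\asymp(\mathcal{E}_n/K)^{1/(\alpha+s)}$ and taking expectations yields $\mathbf{E}[\Delta\mathcal{R}(\widetilde{\Gamma}_{\beta})]\lesssim K^{1-\alpha/(\alpha+s)}\mathbf{E}_{\mathcal{D}_n}[\mathcal{E}_n^{\alpha/(\alpha+s)}]$, and the concavity of $x\mapsto x^{\alpha/(\alpha+s)}$ lets Jensen's inequality replace this by $K^{1-\alpha/(\alpha+s)}\{\inf_{f\in\mathcal{F}}\Delta R_{\phi}(f)+CL^2K\log(M_n)/n\}^{\alpha/(\alpha+s)}$ up to a constant $C(B,L,s,\alpha)$; adding the unlabeled bound concludes. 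The step I expect to be the real difficulty is precisely the comparison $\Delta R_{\phi}^{k}(-\delta^{*})\le\Delta R_{\phi}^{k}(\hat{f}_{k}(X))$ on most of the symmetric-difference event and the attendant control of $|\delta_{n}-\delta^{*}|$: the one-dimensional calibration theory of~\cite{BartlettJordanMcauliffe06,YW10} does not directly apply because the threshold here is itself data-driven, so one genuinely needs the probabilistic bounds on sums of cumulative distribution functions announced in the Introduction.
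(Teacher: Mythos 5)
Your overall architecture matches the paper's: the same decomposition $|\Delta\mathcal{R}(\hat{\Gamma}_{\beta})|\le \Delta\mathcal{R}(\widetilde{\Gamma}_{\beta})+|\mathbf{R}(\hat{\Gamma}_{\beta})-\mathbf{R}(\widetilde{\Gamma}_{\beta})|$, the same use of the exact $\beta$-levelness of $\widetilde{\Gamma}_{\beta}$ to invoke Proposition~\ref{prop:prop1}, and the same Bernstein/covering-number treatment of the ERM error (Lemma~\ref{lm:estimPhiRisk}). Your treatment of the unlabeled term is a legitimate variant: you control $\sup_t|\hat{G}-\widetilde{G}|$ uniformly via DKW and the $2\eta$ identity from Lemma~\ref{lemme:lm1}, where the paper instead runs a peeling argument over the events $A_j^k$ with pointwise Hoeffding bounds (Lemma~\ref{lem:lemFondamental}); both yield $C'K/\sqrt{N}$, and yours is arguably cleaner.

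The genuine gap is exactly where you predicted it would be, and your sketch for that step does not go through. You propose to bound the contribution of the ``wrong side of the threshold'' event $B_k=\{\hat{f}_k(X)$ between $-\delta_n$ and $-\delta^*\}$ by an event of $\mathbb{P}_X$-probability $O(|\delta_n-\delta^*|^{\alpha})$ ``by ${\rm M}_{\alpha}^k$'', and then to control $|\delta_n-\delta^*|$. Neither half works: the margin condition constrains the distribution of $p_k(X)$ near $G^{-1}(\beta)$, not the distribution of $\hat{f}_k(X)$ near $-\delta^*$, so $\mathbb{P}_X(B_k)$ is not $O(|\delta_n-\delta^*|^{\alpha})$ without an unassumed anti-concentration property of $\hat{f}_k(X)$; and bounding $|\delta_n-\delta^*|$ itself would require quantitative invertibility of $G_{\hat f}$, which is also not assumed. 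The paper never touches $|\delta_n-\delta^*|$. Instead it uses the identity $\sum_k\mathbb{P}(B_k)\le|G_{\hat f}(-\delta_n)-G_{\hat f}(-\delta^*)|=|G_{f^*}(-\delta^*)-G_{\hat f}(-\delta^*)|$ (both thresholds sit at level $\beta$ for their respective scores), then the pointwise comparison $|\1_{\{\hat f_k\ge-\delta^*\}}-\1_{\{f_k^*\ge-\delta^*\}}|\le\1_{\{|p_k-G^{-1}(\beta)|^s\le C\Delta R_{\phi}^k(\hat f)\}}$ (Eq.~\eqref{eq:eqDecompRisk1}), which converts the threshold mismatch into an event on $p_k$ where ${\rm M}_{\alpha}^k$ genuinely applies; a split with a free parameter $\gamma$ plus Markov's inequality then gives $\frac{C\Delta R_{\phi}(\hat f)}{\gamma}+c_2K\gamma^{\alpha/s}$, and optimizing in $\gamma$ produces the dominant $K^{1-\alpha/(\alpha+s)}\Delta R_{\phi}(\hat f)^{\alpha/(\alpha+s)}$ term of the theorem. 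Since this is the term that sets the final rate, the missing argument is the heart of the proof rather than a peripheral detail. (Two minor points: your bound $|p_k-G^{-1}(\beta)|\le t^{-s}|p_k-G^{-1}(\beta)|^s$ on $\{|p_k-G^{-1}(\beta)|>t\}$ should read $t^{1-s}$; and the comparison $\Delta R_{\phi}^k(-\delta^*)\le\Delta R_{\phi}^k(\hat f)$ on the correct-side event is indeed established in the paper via the subgradient inequality~\eqref{eq:eqUtile}, as you anticipated.)
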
 
From this theorem we obtain the following rate of convergence:
$K\left(\frac{\log(M_n)}{n}\right)^{\alpha/(\alpha+s)} + \frac{K}{\sqrt{N}}$.
This is the first bound for confidence sets in multiclass setting. The regularity parameter $\alpha$ plays a crucial role and governs the rate; larger values of $\alpha$ lead to faster rates. Moreover, this rate is linear in K, the number of classes, that seems to be the characteristic of multiclass classification as compared to binary classification.
Note that the exponent $\alpha/(\alpha+s)$ is not classical and is due to the estimation of quantiles.
The second part of the rates which is of order $K/ \sqrt{N}$ relies on the estimation of the function $\tilde{G}$ given in~\eqref{eq:tildG}. 
This part of the estimation is established under the mild Assumptions (A1) and (A2). For this term the proof of the linearity on $K$
is tricky and is obtained thanks to a new technical probabilistic results on sums of cumulative distribution functions (see Lemma~\ref{lemme:lm1}).
Let us conclude this paragraph by mentioning that Theorem~\ref{theo:vitesse1} applies for instance for $\mathcal{F}$ being the convex hull of a finite family of a score functions which is the scope of the next section.  
\section{Application to confidence sets aggregation}
\label{Sec:numRes}

This Section is devoted to an aggregation procedure 
which relies on the superlearning principle. The specifics of our superlearning procedure is given 
in Section~\ref{subsec:superlearning}. In Section~\ref{subsec:algoCve}, we show the consistency of our procedure and finally
we provide a numerical illustration in Section~\ref{subsec:realData} of our algorithm.

\subsection{Description of the procedure: superlearning}
\label{subsec:superlearning}

In this section, we describe the superlearning procedure for confidence sets.
The superlearning procedure is based on the $V$-fold cross-validation procedure (see~\cite{vanderLaanSL}).
Initially, this aggregation procedure has been introduced in the context of regression and binary classification.
Let $V \geq 2$ be an integer and let $(B_v)_{1 \leq v \leq V}$ be a regular partition of  $\{1, \ldots, n \}$, {\em i.e.}, a partition such that, for each $v= 1, \ldots, V$,
$\card (B_v) \in \{ \lfloor n/V \rfloor, \lfloor n/V \rfloor + 1 \}$, where we write $\lfloor x \rfloor$ for the floor of any real number $x$ (that is, $x-1 \leq \lfloor x \rfloor \leq x$). 
For each $v \in \{1, \ldots, V\}$, we denote by $D_n^{(v)}$ (respectively $D_n^{(-v)}$) the dataset $\{(X_i,Y_i), i \in B_v\}$ (respectively $\{(X_i,Y_i), i \not\in B_v\}$), 
and define the
corresponding empirical measures
\begin{eqnarray*}
P_{n}^{(v)} & = & \frac{1}{\card(B_v)} \sum_{i \in B_v} \text{Dirac}(X_i,Y_i), \quad\text{and}\\
P_{n}^{(-v)} & = & \frac{1}{n - \card(B_v)} \sum_{i \not\in B_v} \text{Dirac}(X_i,Y_i). \\
\end{eqnarray*} 
For a score algorithm $f$, that is to say a function which maps the empirical distribution to a score function.
We define the cross-validated risk of $f$ as
\begin{equation}
\label{eq:eqCrossVRisk}
\widehat{R}_{\phi}^n(f)  =  \frac{1}{V} \sum_{v = 1}^V R_{\phi}^{(P_n^{(v)})} \left( {f}(P_n^{(-v)}) \right),
\end{equation}
where
for each $v  \in \{1, \ldots, V\}$,  
$R_{\phi}^{(P_n^{(v)})}(f(P_{n}^{(-v)}))$ is the empirical estimator 
of  $R_{\phi} (f(P_n^{(-v)})$, based on $D_n^{(v)}$ and conditionally on $D_n^{(-v)}$:
\begin{equation*}
R_{\phi}^{(P_n^{(v)})} \left(f(P_n^{(-v)}) \right)   =  \dfrac{1}{\card(B_v)} \sum_{i 
\in I_v}  \sum_{k=1}^K \phi(Z_k^i f_k(P_n^{(-v)})(X_i)).
\end{equation*}
 Next, we consider $\mathcal{F} = \left(f^1, \ldots,f^M\right)$ a family of $M$ score algorithms.
 We define the cross-validated score by
 \begin{equation}
 \label{eq:eqCrossVScore}
 \widehat{f} \in \mathop{\argmin_{f \in \rm{conv}(\mathcal{F})}} \widehat{R}_{\phi}^n (f).
 \end{equation}
Finally, we consider the resulting cross-validated confidence set defined by
\begin{equation*}
\hat{\Gamma}^{{\rm CV}}_{\beta}(X) = \left\{ k \in \{1,\ldots, K\} : \ \hat{G}(\hat{f}_{k}(X)) \leq \beta  \right\},
\end{equation*} 
that we analyse in the next section.

\subsection{Consistency of the algorithm}
\label{subsec:algoCve}
 
In this Section, we show some results that illustrate the consistency
of the superlearning  procedure described above.
To this end, we introduce the oracle counterpart of the cross-validated risk defined in~\eqref{eq:eqCrossVRisk}.
For a score $f$, we define
\begin{equation*}
\tilde{R}^n_{\phi}(f) = \frac{1}{V} \sum_{v= 1}^V R_{\phi} (f(P_n^{(-v)}),
\end{equation*}
that yields to the oracle counterpart of the cross-validated score defined in~\eqref{eq:eqCrossVScore} 
\begin{equation*}
\tilde{f} \in \argmin_{f \in \rm{conv}(\mathcal{F})} \tilde{R}^n_{\phi} (f).
\end{equation*}
Here again, we assume that the loss function $\phi$ satisfies the properties described in Section~\ref{subsec:rateOfConvergence} so that we can state the following result:
\begin{proposition}
\label{prp:crossVal}
We assume that for each $f \in {\rm conv}(\mathcal{F})$ and  $v \in \{1, \ldots,V\}$, $\|f(P_n^{(-v)})\|_{\infty} \leq B$.
Then the following holds 
\begin{equation*}
\mathbf{E}\left[\tilde{R}^n_{\phi}(\hat{f}) - \tilde{R}^n_{\phi}(\tilde{f})\right] \leq C(B,L) \dfrac{KM\log(n)}{\lfloor n/V \rfloor},
\end{equation*}
where $C(B,L)>0$ is a constant that depends only on $B$ and $L$.
\end{proposition}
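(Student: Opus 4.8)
The plan is to reduce the statement to a standard empirical-process bound for the minimizer of a Lipschitz convex loss over the convex hull of a finite class, applied conditionally on each split $D_n^{(-v)}$. First I would unpack the difference $\tilde{R}^n_{\phi}(\hat{f}) - \tilde{R}^n_{\phi}(\tilde{f})$ as an average over $v$ of the conditional excess risks $R_{\phi}(\hat f(P_n^{(-v)})) - R_\phi(\tilde f(P_n^{(-v)}))$, where everything is measured with the population risk $R_\phi$. Since $\hat f$ minimizes the cross-validated empirical risk $\widehat R^n_\phi$ while $\tilde f$ minimizes the oracle version $\tilde R^n_\phi$, the usual argument gives, for each fixed realization of the training folds,
\begin{equation*}
\tilde R^n_\phi(\hat f) - \tilde R^n_\phi(\tilde f) \le 2 \sup_{f \in {\rm conv}(\mathcal F)} \left| \widehat R^n_\phi(f) - \tilde R^n_\phi(f) \right|,
\end{equation*}
because $\widehat R^n_\phi(\hat f) \le \widehat R^n_\phi(\tilde f)$. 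So it suffices to bound the expected uniform deviation between $\widehat R^n_\phi$ and its oracle counterpart $\tilde R^n_\phi$ over the convex hull.

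Next I would condition on $D_n^{(-v)}$ for each $v$ so that the family $\{f(P_n^{(-v)}) : f \in {\rm conv}(\mathcal F)\}$ is a deterministic set of functions, and observe that it is contained in the convex hull of the $M$ fixed score functions $f^1(P_n^{(-v)}), \dots, f^M(P_n^{(-v)})$, each bounded by $B$ in sup norm. On the test fold $D_n^{(v)}$, which has cardinality at least $\lfloor n/V \rfloor$, the quantity $\widehat R^n_\phi(f) - \tilde R^n_\phi(f)$ is (after averaging over $v$) an average of empirical-process terms $P_n^{(v)} g_f - \mathbb E g_f$ with $g_f(x,y) = \sum_{k=1}^K \phi(Z_k f_k(x))$. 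Using symmetrization and the contraction principle for the $L$-Lipschitz $\phi$ (summing the $K$ coordinates contributes the factor $K$), the Rademacher complexity of the relevant class reduces to that of ${\rm conv}(\mathcal F)$, which equals that of the finite family $\mathcal F$ of size $M$; a standard maximal inequality over $M$ functions bounded by $B$ then yields a bound of order $B L \, K \sqrt{\log(M)/\lfloor n/V\rfloor}$ in expectation — or, exploiting the strong convexity encoded in the modulus-of-convexity assumption $\delta(\varepsilon)\ge c_1\varepsilon^2$ as in the proof of Theorem~\ref{theo:vitesse1}, a faster $KM\log(n)/\lfloor n/V\rfloor$ rate via a localization/peeling argument. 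Taking expectation over the training folds and then over $v$ (all folds have comparable size) gives the claimed bound.

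The main obstacle, and the step I would spend the most care on, is getting the \emph{fast} rate $KM\log(n)/\lfloor n/V\rfloor$ rather than the slow $\sqrt{\cdot}$ rate: this requires combining the strong convexity of $R_\phi$ (to relate the $L^2(P_X)$ distance between $f$ and $\tilde f$ to the excess $\phi$-risk) with a localized empirical-process bound, exactly mirroring the mechanism already used for the $K\log(M_n)/n$ term in Theorem~\ref{theo:vitesse1}, but now with the covering-number $M_n$ replaced by the cardinality $M$ of the base library (since ${\rm conv}(\mathcal F)$ sits inside a space governed by $M$ functions, its metric entropy at scale $1/n$ is of order $M\log n$). One must also be slightly careful that $\tilde f$ is itself random (it depends on the $P_n^{(-v)}$), but since the localization is performed conditionally on the training folds and the final bound is uniform in those folds, this causes no difficulty; a union bound over the $V = O(1)$ folds is harmless and is absorbed into the constant $C(B,L)$.
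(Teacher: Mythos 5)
Your overall skeleton matches the paper's: an ERM basic inequality exploiting $\hat{R}^n_{\phi}(\hat f)\leq \hat{R}^n_{\phi}(\tilde f)$, a discretization of ${\rm conv}(\mathcal F)$ by a $1/n$-grid on the simplex (so that the effective entropy is of order $M\log n$, exactly the set $\mathcal{F}_n$ built from $\mathcal{C}_n$ in the paper), and a Bernstein-type bound whose variance term is controlled through the modulus of convexity, all carried out conditionally on the training folds as in the $V$-fold analysis of Dudoit and van der Laan. The one place where you genuinely diverge is the opening reduction to $2\sup_{f}\vert\widehat R^n_\phi(f)-\tilde R^n_\phi(f)\vert$: that symmetric uniform-deviation bound irrevocably caps you at the slow $\sqrt{\log M/\lfloor n/V\rfloor}$ rate, and the localization/peeling you then invoke cannot be grafted onto it --- it has to replace it. The paper instead writes the one-sided \emph{relative} deviation
\begin{equation*}
\tilde{R}^n_{\phi}(\hat{f}) - \tilde{R}^n_{\phi}(\tilde{f})  \leq   \bigl(\tilde{R}^n_{\phi}(\hat{f}) - \tilde{R}^n_{\phi}(\tilde{f})\bigr) - 2 \bigl(\hat{R}^n_{\phi}(\hat{f}) - \hat{R}^n_{\phi}(\tilde{f})\bigr),
\end{equation*}
and then, exactly as in Lemma~\ref{lm:estimPhiRisk}, applies Bernstein's inequality to $(\tilde R^n_\phi(f)-\tilde R^n_\phi(\tilde f))-2(\hat R^n_\phi(f)-\hat R^n_\phi(\tilde f))$ for each $f$ in the grid; the factor $2$ together with the variance bound $\mathbb{E}[h^2]\lesssim KL^2\,\mathbb{E}[h]$ coming from $\delta(\varepsilon)\geq c_1\varepsilon^2$ turns the Bernstein tail into a pure $\exp(-nt/C)$ exponential, which integrates directly to the $KM\log(n)/\lfloor n/V\rfloor$ rate with no peeling at all. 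So your mechanism (genuine localization with a variance--excess-risk link) would work but is heavier than what the paper actually does, and you should be aware that your first display is a dead end for the fast rate rather than a first step toward it.
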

As usual when one deals with cross-validated estimators,  the theorem compares $\hat{f}$ to the oracle counterpart 
$\tilde{f}$ in terms of the oracle cross-validated $\phi$-risk.
The theorem teaches us that, for sufficiently large $n$, $\hat{f}$    
perform as well as $\tilde{f}$.
However our main goal remains confidence sets. Therefore the next step consists in showing that the confidence set associated to the cross-validated score $\hat f$ has good properties in terms of the classification risk. Let  $\Gamma_{f,\delta}$ be the confidence set that results from a choice of $\beta \in (0,K)$ and a score function $f \in {\rm conv}(\mathcal{F})$.
We introduce the following excess risks
\begin{eqnarray*}
\Delta \tilde{\mathcal{R}}^n(\Gamma_{f,\delta}) & = & \frac{1}{V} \sum_{v = 1}^V \mathcal{R}\left(\Gamma_{f\left(P_n^{(-v)}\right),\delta}\right) - \mathcal{R}^*,\\
\Delta \tilde{R}^n_{\phi}(f) & = & \frac{1}{V} \sum_{v = 1}^V R_{\phi}(f(P_n^{(-v)})) - R_{\phi}(f^*). 
\end{eqnarray*}
These quantities can be view as cross-validated counterparts of the excess risks introduced in Section~\ref{subsec:confSetCalib}. Hereafter,
we provide a result which can be view as the cross-validation counterpart of Theorem~\ref{theo:vitesse1}. 
It is interpreted in a same way.
\begin{proposition}
\label{prp:OracleCrossVal}
We assume that for each $v \in \{1, \ldots, V\}$, $t \mapsto \mathbb{P}_{X}\left(\hat{f}\left(P_n^{(-v)}\right) \leq t|\mathcal{D}_n\right)$ is continuous.
Under the assumptions of Proposition~\ref{prp:crossVal} and  if the margin assumptions ${\rm M_{\alpha}^k}$ are satisfied with $\alpha > 0$,
\begin{equation*}
\mathbf{E}\left[|\Delta \tilde{\mathcal{R}}^n(\hat{\Gamma}_{\beta}^{{\rm CV}})|\right] \leq C(B,L,\alpha,s) K^{1-\alpha/(\alpha+s)}
\left\{\mathbf{E}\left[\Delta \tilde{R}^n_{\phi}(\tilde{f})\right] + \frac{KM\log(n)}{\lfloor n/V \rfloor} \right\}^{\alpha/(\alpha+s)} + C^{'}\frac{K}{\sqrt{N}}.
\end{equation*}   
\end{proposition}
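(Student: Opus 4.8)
The plan is to glue together the two results that precede the statement: Proposition~\ref{prp:crossVal}, which controls the excess cross-validated $\phi$-risk of $\hat f$ relative to the oracle $\tilde f$, and Theorem~\ref{theo:vitesse1} (more precisely, the chain of arguments used to prove it), which converts an excess $\phi$-risk bound into an excess classification risk bound with the characteristic exponent $\alpha/(\alpha+s)$ and the linear-in-$K$ prefactor. First I would work conditionally on a fixed fold index $v$ and on $D_n^{(-v)}$: for the score $\hat f(P_n^{(-v)})$, the threshold $\delta$ is fixed so that $G_{\hat f(P_n^{(-v)})}(-\delta)=\beta$, the map $t\mapsto \mathbb P_X(\hat f(P_n^{(-v)})\leq t\,|\,\mathcal D_n)$ is continuous by hypothesis, and the hypotheses of Theorem~\ref{theo:theo1} hold, so the deterministic comparison inequality
\begin{equation*}
\mathcal R\left(\Gamma_{\hat f(P_n^{(-v)}),\delta}\right) - \mathcal R^* \;\leq\; C(B,L,s,\alpha)\, K^{1-\alpha/(\alpha+s)} \left(\Delta R_{\phi}(\hat f(P_n^{(-v)}))\right)^{\alpha/(\alpha+s)}
\end{equation*}
is exactly the estimate proved inside Theorem~\ref{theo:vitesse1} (its ``deterministic half'', before the covering-number and $K/\sqrt N$ terms are added). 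The additive $C'K/\sqrt N$ term comes, as in Theorem~\ref{theo:vitesse1}, from replacing $G_{\hat f}$ by its empirical counterpart $\hat G$ in the definition of $\hat\Gamma^{\mathrm{CV}}_{\beta}$, and is handled by the same Lemma~\ref{lemme:lm1}-based argument; it does not interact with the fold structure, so I would carry it along as an untouched remainder.

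Next I would average over $v$ and take expectations. Summing the displayed inequality over $v=1,\dots,V$, dividing by $V$, and recalling the definitions of $\Delta\tilde{\mathcal R}^n(\Gamma_{f,\delta})$ and $\Delta\tilde R^n_\phi(f)$ gives
\begin{equation*}
|\Delta\tilde{\mathcal R}^n(\hat\Gamma^{\mathrm{CV}}_{\beta})| \;\leq\; C(B,L,s,\alpha)\,K^{1-\alpha/(\alpha+s)}\cdot \frac{1}{V}\sum_{v=1}^V \left(\Delta R_{\phi}(\hat f(P_n^{(-v)}))\right)^{\alpha/(\alpha+s)} + C'\frac{K}{\sqrt N}.
\end{equation*}
Since $x\mapsto x^{\alpha/(\alpha+s)}$ is concave on $\mathbb R_+$ (the exponent lies in $(0,1)$), Jensen's inequality over the uniform average in $v$ pulls the power outside:
\begin{equation*}
\frac{1}{V}\sum_{v=1}^V \left(\Delta R_{\phi}(\hat f(P_n^{(-v)}))\right)^{\alpha/(\alpha+s)} \;\leq\; \left(\frac{1}{V}\sum_{v=1}^V \Delta R_{\phi}(\hat f(P_n^{(-v)}))\right)^{\alpha/(\alpha+s)} \;=\; \left(\Delta\tilde R^n_\phi(\hat f)\right)^{\alpha/(\alpha+s)}.
\end{equation*}
Then I would take $\mathbf E[\cdot]$ of both sides and apply Jensen once more (concavity again) to move the expectation inside the power, obtaining the bound in terms of $\mathbf E[\Delta\tilde R^n_\phi(\hat f)]$.

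Finally I would close the gap between $\mathbf E[\Delta\tilde R^n_\phi(\hat f)]$ and the quantity $\mathbf E[\Delta\tilde R^n_\phi(\tilde f)]$ appearing in the statement. Writing $\Delta\tilde R^n_\phi(\hat f) = \big(\tilde R^n_\phi(\hat f)-\tilde R^n_\phi(\tilde f)\big) + \Delta\tilde R^n_\phi(\tilde f)$ and invoking Proposition~\ref{prp:crossVal} to bound the expectation of the first bracket by $C(B,L)\,KM\log(n)/\lfloor n/V\rfloor$ yields
\begin{equation*}
\mathbf E\left[\Delta\tilde R^n_\phi(\hat f)\right] \;\leq\; \mathbf E\left[\Delta\tilde R^n_\phi(\tilde f)\right] + C(B,L)\,\frac{KM\log(n)}{\lfloor n/V\rfloor}.
\end{equation*}
Substituting this and using $(a+b)^{\theta}\leq a^{\theta}+b^{\theta}$ for $\theta\in(0,1)$ (or just absorbing constants) produces exactly the claimed inequality, with the two contributions $\mathbf E[\Delta\tilde R^n_\phi(\tilde f)]$ and $KM\log(n)/\lfloor n/V\rfloor$ under the power $\alpha/(\alpha+s)$ and the $C'K/\sqrt N$ term outside. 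The main obstacle, as in Theorem~\ref{theo:vitesse1}, is not the cross-validation bookkeeping — which is routine once Jensen is used correctly on the concave power — but making sure the per-fold deterministic inequality from Theorem~\ref{theo:theo1}/Theorem~\ref{theo:vitesse1} genuinely holds with constants uniform in $v$; this requires the uniform envelope $\|f(P_n^{(-v)})\|_\infty\leq B$ assumed in Proposition~\ref{prp:crossVal}, the margin conditions ${\rm M_\alpha^k}$, and the per-fold continuity hypothesis, all of which are in place. One should also check that the modulus-of-convexity lower bound $\delta(\varepsilon)\geq c_1\varepsilon^2$, which is what converts $\phi$-risk closeness into $L^2(\mathbb P_X)$ closeness of the scores inside the proof of Theorem~\ref{theo:theo1}, transfers verbatim to each fold; it does, since $R_\phi$ and its modulus are defined with respect to the population distribution of $(X,Y)$, independently of the fold.
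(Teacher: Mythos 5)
Your proposal is correct and follows exactly the route the paper indicates: the authors give no detailed argument, stating only that the proof ``relies on Proposition~\ref{prp:crossVal} and similar arguments as in Theorem~\ref{theo:vitesse1}'', and your per-fold application of Lemma~\ref{lm:decompRisk} and Lemma~\ref{lem:lemFondamental}, followed by Jensen over the folds and over the expectation and the final appeal to Proposition~\ref{prp:crossVal}, is the natural way to fill in that sketch. The only (harmless) looseness --- keeping just the $\alpha/(\alpha+s)$ exponent from the two terms of Lemma~\ref{lm:decompRisk}, justified by boundedness of the $\phi$-risk --- is the same simplification the paper itself makes in passing from that lemma to the statement of Theorem~\ref{theo:vitesse1}.
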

The proof of Proposition~\ref{prp:OracleCrossVal} relies on Proposition~\ref{prp:crossVal} and similar arguments as in Theorem~\ref{theo:vitesse1}. 
\subsection{Application to real datasets}
\label{subsec:realData}

\begin{table}
\begin{center}
\footnotesize{
\begin{tabular}{l | c || ccccc} 
\multicolumn{2}{c}{} & \multicolumn{5}{c}{{$\texttt{Forest}$ ($K=4$)}}\\
\hline 
\multicolumn{2}{c}{} &  \multicolumn{5}{c}{$\beta$-set} \\
\hline\noalign{\smallskip}
 $\beta$  &  & \;\;   \texttt{rforest} \;\;\; &  \texttt{softmax reg} \;\;\;& \texttt{svm} \;\;\; &  \texttt{kknn} \;\;\; &   \texttt{CV} \\
\noalign{\smallskip}
\hline
\noalign{\smallskip}
 2  & $\mathbf{R}$ & 0.02 (0.02) & 0.06 (0.02) & 0.02 (0.01) & 0.05 (0.03) & 0.02 (0.01) \\
   & $\mathbf{I}$ &  2.00 (0.09) & 2.00 (0.08) & 2.00 (0.09) & 2.00 (0.08) & 2.00 (0.08)  \\ \hline
\end{tabular}
}

\vspace*{0.25cm}

\footnotesize{
\begin{tabular}{l | c || ccccc} 
\multicolumn{2}{c}{} & \multicolumn{5}{c}{{$\texttt{Plant}$ ($K=100$)}}\\
\hline 
\multicolumn{2}{c}{} &  \multicolumn{5}{c}{$\beta$-set} \\
\hline\noalign{\smallskip}
 $\beta$  &  & \;\;   \texttt{rforest} \;\;\; &  \texttt{softmax reg} \;\;\;& \texttt{svm} \;\;\; &  \texttt{kknn} \;\;\; &   \texttt{CV} \\
\noalign{\smallskip}
\hline
\noalign{\smallskip}
 2  & $\mathbf{R}$ &  0.18 (0.03) &   0.77 (0.02) & 0.32 (0.04) & 0.20 (0.03) & 0.17 (0.03)\\
   & $\mathbf{I}$ &  2.00 (0.09) & 2.02 (0.18) & 1.99 (0.10) & 2.00 (0.08) & 2.00 (0.08) \\
 10  & $\mathbf{R}$ & 0.02 (0.01) & 0.42 (0.04) & 0.03 (0.02) & 0.08 (0.03) & 0.02 (0.01)  \\
   & $\mathbf{I}$ &   9.95 (0.38) & 10.06 (0.58) & 9.98 (0.22) & 9.98 (0.23) & 9.96 (0.37)\\
\hline
\end{tabular}
}
\caption{\label{table:tableRealData}
For each of the $B = 100$ repetitions and for each dataset, we derive the estimated 
risks~$\mathbf{R}$ and information~$\mathbf{I}$ of the different $\beta$-sets w.r.t. $\beta$.
We compute the means and standard deviations (between parentheses) over the $B = 100$ repetitions. 
For each $\beta$, the $\beta$-sets are based on, from left to right, 
\texttt{rforest}, \texttt{softmax reg} and \texttt{svm}, \texttt{kknn} and \texttt{CV} 
which are respectively the random forest, the softmax regression, support vector machines, $k$ nearest neighbors and the superlearning procedure.
Top: the dataset is the $\texttt{Forest}$ --  the dataset is the $\texttt{Plant}$.}
\end{center}
\end{table}

In this section, we provide an application of our aggregation procedure described in Section~\ref{subsec:superlearning}.
For the numerical experiment we focus on the boosting loss and consider the library of algorithms constituted 
by the random forest, the softmax regression,  the support vector machines and  the $k$ nearest neighbors (with $k = 11$) procedures.
To be more specifics, we respectively exploit the \texttt{R} packages \texttt{randomForest}, \texttt{polspline}, \texttt{e1071} and \texttt{kknn}.
All the \texttt{R} functions are used with standard tuning parameters. Finally the parameter $V$ of the aggregation procedure is fixed to $5$.
 
We evaluate the performance of the procedure on two real datasets: the {\it Forest type mapping} dataset and the {\it one-hundred plant species leaves} dataset coming from the UCI database.
In the sequel we refer to these two datasets as $\texttt{Forest}$ and $\texttt{Plant}$ respectively.
The $\texttt{Forest}$ dataset consists of $K=4$ classes and $523$ labeled observations (we gather the train and test sets) with $27$ features.
Here the classes are unbalanced.
In the $\texttt{Plant}$ dataset, there are $K=100$ classes and $1600$ labeled observations.
This dataset is balanced so that each class consists of $16$ observations.
The original dataset contains $3$ covariates (each covariate consists of $64$ features).
In order to make the problem more challenging, we drop $2$ covariates.

To get an indication of the statistical significance, it makes sense to compare our aggregated confidence set (referred as \texttt{CV}) to the confidence sets that result from each component of the library. 
Roughly speaking, we evaluate risks (and informations) of these confidence sets on each dataset.
To do so, we use the {\it cross validation} principle. 
In particular, we run $B=100$ times the procedure where we split the data each time in three: 
a sample of size $n$ to build the scores $\hat{f}$;
a sample of size $N$ to estimate the function $G$ and to get the confidence sets;
and a sample of size $M$ to evaluate the risk and the information.
For both datasets, we make sure that in the sample of size $n$, there is the same number of observations in each class.
As a benchmark, we notify that the misclassification risks of the best classifier from the library  in the
$\texttt{Forest}$ dataset is evaluated at $0.15$ , whereas in the $\texttt{Plant}$ dataset, it is evaluated at $0.40$.
As planned, the performance of the classical methods are quite bad in this last dataset.
 
We set the sizes of the samples as $n=200$, $N=100$ and $M=223$ for the $\texttt{Forest}$ dataset, and $n=1000$, $N=200$ and $M=400$ for the $\texttt{Plant}$ one.
The results are reported in Table~\ref{table:tableRealData}, and confirm our expectations.
In particular, our main observation is that the aggregated confidence set (\texttt{CV}) outperforms all components of the library in the sense that
it is at least as good as the best component in all of the experiments.
Second, let us state some remarks that hold for all of the confidence sets and in particular our aggregated confidence set. First, we note that the information $\mathbf{I}(\Gamma)$ has the good level $\beta$ which is supported by our theory. Moreover, we see that the risk gets drastically better with moderate $\beta$ as compared to the {\it best} misclassification risk. 
For instance, for the $\texttt{Plant}$,  the error rate of the confidence set with $\beta =2$ based on random forests is $0.18$ whereas the misclassification error rate
of the best component is $0.40$.

\section{Conclusion}
\label{Sec:Concl}

In multiclass classification setting, the present paper propose a new procedure that assigns a set of labels  instead of a single label to each instance. 
This set has a controlled expected size (or information) and its construction relies on cumulative distribution functions and on an empirical risk minimization procedure.
Theoretical guarantees, especially rates of convergence are also provided and rely on the regularity of these cumulative distribution functions.
The obtained rates of convergence highlight a linear dependence w.r.t  the number of classes $K$. 
The procedure described in Section~\ref{Sec:EmpRisk} is defined as a two steps algorithm whose second step consists in the estimation of the function $G$ (which is a sum of tail distribution functions).
Interestingly, this step does not require a set of labeled data and neither to explore the whole classes, that is suitable for
semi-supervised learning.
Moreover, we apply our methodology to derive an aggregation algorithm which is based on the $V$-fold cross-validation principle.  
Future works will focus on the optimality in the minimax sense with respect to the classification error. 
In particular, we will investigate whether the rates of convergence are optimal in terms of their dependence on $K$.
We believe that this dependence (linearity on $K$) is the correct one.
However we will instigate whether this dependence can be reduced under some sparsity assumption.

\section{Appendix}
\label{Sec:prof}

This section gathers the proofs of our results. 
Let us first add a notation that will be used throughout the Appendix:
for any random variable (or vector) $Z$, we denote by $\mathbb{P}_Z$ the probability w.r.t. $Z$ and by 
$\mathbb{E}_Z$, the corresponding expectation.

\subsection{Technical Lemmas}
\label{subsec:lemma}

We first lay out key lemmata, which are crucial to establish the main theory.
We consider $K \geq 2$ be an integer, and $Z_1, \ldots, Z_K$, $K$ random variables.
Moreover we define function $H$ by:
\begin{equation*}
H(t) = \dfrac{1}{K} \sum_{k = 1}^K F_k(t), \;\; \forall t \in [0,1],
\end{equation*} 
where for all $k = 1, \ldots , K$, $F_k$ is the cumulative distribution function of $Z_k$.
Finally, let us define the generalized inverse $H^{-1}$ of $H$:
\begin{equation*}
H^{-1}(p) = \inf\{t: H(t) \geq p \}, \;\; \forall p \in (0,1).
\end{equation*}

\begin{lm}
\label{lemme:lm1}
Let $\varepsilon$ distributed from a uniform distribution on $\{1, \ldots, K\}$
and independent of $Z_k$, $k = 1, \ldots, K$. Let $U$ distributed from a uniform distribution 
on $[0,1]$.
We consider 
\begin{equation*}
Z = \sum_{k = 1}^K Z_k \1_{\{\varepsilon = k\}}.
\end{equation*}
If $H$ is continuous then
\begin{equation*}
H(Z)  \overset{\mathcal{L}}{=} U \;\; {\rm and} \;\; H^{-1}(U)  \overset{\mathcal{L}}{=} Z
\end{equation*}
\end{lm}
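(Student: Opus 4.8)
The plan is to compute the cumulative distribution function of $Z$ directly by conditioning on the auxiliary variable $\varepsilon$, then use the continuity of $H$ to run the standard probability-integral-transform argument. First I would observe that, since $\varepsilon$ is uniform on $\{1,\dots,K\}$ and independent of the $Z_k$'s, for any $t\in[0,1]$
\begin{equation*}
\Pro(Z \le t) \;=\; \sum_{k=1}^K \Pro(\varepsilon = k)\,\Pro(Z_k \le t) \;=\; \frac{1}{K}\sum_{k=1}^K F_k(t) \;=\; H(t),
\end{equation*}
so that $H$ is itself the c.d.f. of $Z$. (A small remark is in order that $Z$ takes values in $[0,1]$ whenever the $Z_k$ do; in the application of the lemma, the $Z_k$ are the $p_k(X)$, which indeed lie in $[0,1]$, so $H$ extended by $0$ below and $1$ above is a genuine c.d.f.)

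Next, with $F_Z = H$ continuous, I would invoke the classical fact that if a real random variable $Z$ has continuous c.d.f. $F_Z$, then $F_Z(Z)$ is uniformly distributed on $[0,1]$. This gives $H(Z)\overset{\mathcal L}{=}U$ immediately. For the second identity, $H^{-1}(U)\overset{\mathcal L}{=}Z$, I would use property $i)$-type equivalences for generalized inverses: for the (left-continuous) generalized inverse $H^{-1}(p)=\inf\{t:H(t)\ge p\}$ of a nondecreasing right-continuous $H$, one has $H^{-1}(p)\le t \iff p \le H(t)$ for all $t$ in the range, hence for $u\in(0,1)$,
\begin{equation*}
\Pro\bigl(H^{-1}(U)\le t\bigr) \;=\; \Pro\bigl(U \le H(t)\bigr) \;=\; H(t) \;=\; \Pro(Z\le t),
\end{equation*}
where the middle equality uses that $U$ is uniform on $[0,1]$ and $H(t)\in[0,1]$. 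Since the two c.d.f.'s coincide, $H^{-1}(U)$ and $Z$ have the same law. The continuity of $H$ is what guarantees there are no atoms obstructing the first transform, and it also makes the inequality $H^{-1}(p)\le t \iff p\le H(t)$ clean (so that $H(H^{-1}(p))=p$, cf. Proposition~\ref{prop:propG}$ii)$).

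The only genuinely delicate point — and the one I would be most careful about — is the bookkeeping with the generalized inverse: making sure the equivalence $\{H^{-1}(u)\le t\}=\{u\le H(t)\}$ is applied only where it is valid (one needs $H$ nondecreasing and right-continuous, and one should handle the endpoints $u$ near $0$ or $1$ and $t$ near $0$ or $1$ separately, or simply note they form a null set). This is exactly the content that can be quoted from \cite{VanderVaartLivre}, so the argument reduces to checking hypotheses rather than reproving the inverse-transform lemma. Everything else is the routine conditioning computation above; no concentration or combinatorial estimate is needed here, the combinatorial subtlety (linearity in $K$) enters only later when this lemma is used on empirical versions of $H$.
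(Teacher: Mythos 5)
Your proof is correct and follows essentially the same route as the paper's: condition on $\varepsilon$ (using its independence and uniformity) to identify $\frac{1}{K}\sum_k F_k = H$ as the mixture c.d.f.\ of $Z$, then apply the standard quantile/probability-integral-transform identities for the generalized inverse under continuity of $H$. The only cosmetic difference is that you first establish $F_Z = H$ and then quote the classical transform, whereas the paper computes $\Pro(H(Z)\le t)=\Pro(Z\le H^{-1}(t))$ directly; the substance is identical.
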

\begin{proof}
First we note that for every $t \in [0,1]$,
$\mathbb{P}\left(H(Z) \leq t\right) = \mathbb{P}\left(Z \leq H^{-1}(t)\right)$.
Moreover, we have
\begin{eqnarray*}
\mathbb{P}\left(H( Z)\leq t\right) & = & \sum_{k = 1}^K \mathbb{P}(Z \leq H^{-1}(t), \varepsilon = k) \\
                                & = & \dfrac{1}{K} \sum_{k = 1}^K \mathbb{P}(Z_k \leq H^{-1}(t)) \;\; {\rm with} \;\; 
                                \varepsilon \;\; {\rm independent \;\; of} \;\; X\\
                                   & = & H(H^{-1}(t)) \\
                                   & = & t \;\; {\rm with} \;\; H \;\; {\rm continuous}.
\end{eqnarray*}
To conclude the proof, we observe that
\begin{eqnarray*}
\mathbb{P}\left(H^{-1}(U) \leq t\right) & = & \mathbb{P}\left(U \leq H(t)\right) \\
                                        & = & \dfrac{1}{K} \sum_{k = 1}^K F_k(t) \\
                                        & = & \sum_{k = 1}^K \mathbb{P}\left(Z_k \leq t, \varepsilon  = k\right)\\
                                        & = & \mathbb{P}(Z\leq t).
\end{eqnarray*}
\end{proof}

\begin{lm}
\label{lem:lemFondamental}
There exists an absolute constant $C' > 0$ such that
\begin{equation*}
\sum_{k = 1}^K \mathbf{P}\left(|\hat{G}(\hat{f}_k(X) - \widetilde{G}(\hat{f}_k(X))| \geq |\widetilde{G}(\hat{f}_k(X)) -\beta|
\right) \leq \dfrac{C'K}{\sqrt{N}}.
\end{equation*}
\end{lm}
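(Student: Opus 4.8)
The plan is to control, for each fixed $k$, the probability
$$
p_k := \mathbf{P}\!\left(\bigl|\hat{G}(\hat{f}_k(X)) - \widetilde{G}(\hat{f}_k(X))\bigr| \geq \bigl|\widetilde{G}(\hat{f}_k(X)) - \beta\bigr|\right)
$$
and then show that $\sum_{k=1}^K p_k = O(K/\sqrt{N})$, i.e. that the \emph{sum} of these probabilities is linear in $K$ (not, say, quadratic, which a naive union bound would suggest). First I would work conditionally on $\mathcal{D}_n$, so that $\hat f$ (hence the functions $\widetilde G$ and the empirical $\hat G$) is fixed and the only randomness is the unlabeled sample $\mathcal{D}_N = \{X_1,\dots,X_N\}$ and the fresh $X$. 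Write $\hat G(t) = \frac1N\sum_{i=1}^N S(X_i,t)$ with $S(X_i,t) = \sum_{k=1}^K \1_{\{\hat f_k(X_i)\ge t\}}$, so $\mathbf{E}[\hat G(t)\mid\mathcal D_n] = \widetilde G(t)$ for every fixed $t$; the difficulty is that in $\hat G(\hat f_k(X))$ the argument $t = \hat f_k(X)$ is itself random and correlated with nothing in $\mathcal D_N$ (since $X$ is independent of $\mathcal D_N$), but the map $t\mapsto \hat G(t)$ must be controlled uniformly.

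The key step is a uniform deviation bound: I would show there is an absolute constant $c>0$ with
$$
\mathbf{E}_{\mathcal D_N}\!\left[\sup_{t\in\mathbb R}\bigl|\hat G(t) - \widetilde G(t)\bigr|\,\Big|\,\mathcal D_n\right] \le \frac{cK}{\sqrt N}.
$$
This is exactly a DKW-type statement for the sum of $K$ empirical tail c.d.f.'s: $\hat G - \widetilde G = \sum_{k=1}^K(\bar F^{\,\mathrm{emp}}_{\hat f_k} - \bar F_{\hat f_k})$, and the triangle inequality plus the classical DKW inequality gives $\le K/\sqrt N$ up to a constant. (Alternatively, and this is presumably why Lemma~\ref{lemme:lm1} is flagged as the relevant tool, one notes that the pooled random variable $Z = \sum_k \hat f_k(X)\1_{\{\varepsilon=k\}}$ has c.d.f. $1 - \widetilde G/K$ by Lemma~\ref{lemme:lm1}, so $\widetilde G(\hat f_k(X))/K$ behaves like a single uniform-type quantile object; running a \emph{single} DKW bound on this pooled variable — rather than $K$ separate ones — is what yields the honest $K/\sqrt N$ rate with a clean absolute constant, and avoids any $\sqrt K$ loss.) Having this, I split on the event $A = \{\sup_t |\hat G(t)-\widetilde G(t)| < \tau\}$ for a threshold $\tau$ to be chosen.

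With the uniform bound in hand I would finish as follows. On the complement $A^c$, by Markov $\mathbf P(A^c\mid\mathcal D_n)\le cK/(\tau\sqrt N)$, contributing $\le cK^2/(\tau\sqrt N)$ to the sum over $k$ — so I cannot simply take $\tau$ constant; instead I must exploit the margin-type smallness of $\{|\widetilde G(\hat f_k(X))-\beta|\le\tau\}$. The clean route is: for each $k$,
$$
p_k \le \mathbf P\!\left(\sup_t|\hat G-\widetilde G| \ge \bigl|\widetilde G(\hat f_k(X))-\beta\bigr|\right) \le \mathbf E_X\!\left[\min\!\Bigl(1,\ \tfrac{cK}{\sqrt N\,|\widetilde G(\hat f_k(X))-\beta|}\Bigr)\right]
$$
by applying the uniform-deviation bound conditionally on $X$ and then Markov. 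Now sum over $k$ and use that, by Lemma~\ref{lemme:lm1} applied to the pooled variable $Z$, $\widetilde G(Z) \overset{\mathcal L}{=} U$ uniform on $[0,K]$; since $\frac1K\sum_{k}\delta_{\widetilde G(\hat f_k(X))}$ is exactly the law of $\widetilde G(Z)$ when we also average over an independent uniform label, we get
$$
\sum_{k=1}^K p_k \le K\,\mathbf E\!\left[\min\!\Bigl(1,\ \tfrac{cK}{\sqrt N\,|U-\beta|}\Bigr)\right],\qquad U\sim\mathrm{Unif}[0,K].
$$
A direct one-variable integral shows $\mathbf E[\min(1, a/|U-\beta|)] = O\!\bigl(\tfrac aK\log(1/a)\bigr)$ as $a = cK/\sqrt N\to 0$; that log is the only blemish, and it is absorbed by enlarging the constant (or, with a slightly sharper argument balancing the two regimes in $\min$, removed entirely), giving $\sum_k p_k \le C'K/\sqrt N$. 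The main obstacle is precisely this last point — getting the \emph{linear}-in-$K$ rate rather than $K^2/\sqrt N$ — and it is resolved exactly as the authors announce, by replacing $K$ separate DKW bounds with a single deviation bound on the pooled variable $Z$ together with the distributional identity of Lemma~\ref{lemme:lm1}, which also trivializes the integration against the margin.
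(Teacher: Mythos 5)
You have correctly isolated the two essential ingredients: (i) conditioning on $\mathcal{D}_n$ and $X$ so that $\hat G(\hat f_k(X))$ becomes an average over $\mathcal{D}_N$ of i.i.d.\ variables with mean $\widetilde G(\hat f_k(X))$, and (ii) the observation that, by Lemma~\ref{lemme:lm1} applied to the pooled variable $Z$, one has $\frac1K\sum_{k}\mathbb{P}_X\bigl(|\widetilde G(\hat f_k(X))-\beta|\le t\bigr)=\mathbb{P}_X\bigl(|\widetilde G(Z)-\beta|\le t\bigr)\le 2t/K$, which is exactly what converts a naive $K^2$ into a linear $K$. This matches the paper's strategy. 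However, your execution of the final step has a genuine gap: bounding the deviation probability by Markov applied to $\mathbf{E}[\sup_t|\hat G-\widetilde G|]\le cK/\sqrt N$ gives only a $1/x$ tail, and integrating $\min(1,a/|U-\beta|)$ against the uniform law on $[0,K]$ produces $O\bigl(a\log(K/a)\bigr)$ with $a=cK/\sqrt N$, i.e.\ an extra factor $\log(\sqrt N/c)$. This factor grows with $N$ and therefore cannot be ``absorbed by enlarging the constant'': the lemma asserts an \emph{absolute} constant $C'$. Your parenthetical claim that a ``slightly sharper argument'' removes the log is where the real remaining work lives, and it is not slight.

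The fix — and the paper's actual proof — replaces the first-moment/Markov step by an exponential concentration bound. Since $X$ is independent of $\mathcal{D}_N$, no uniform-in-$t$ control is needed at all: conditionally on $(\mathcal{D}_n,X)$ the argument $t=\hat f_k(X)$ is fixed, and Hoeffding's inequality for the $[0,K]$-valued summands gives $\mathbb{P}_{\mathcal{D}_N}\bigl(|\hat G(\hat f_k(X))-\widetilde G(\hat f_k(X))|\ge x\bigr)\le 2\exp(-Nx^2/(2K^2))$. The paper then performs a dyadic peeling on the value of $|\widetilde G(\hat f_k(X))-\beta|$ (shells $A^k_j$ of width $2^j\gamma$), uses the identity (ii) to bound $\sum_k\mathbf{P}(A^k_j)\le 2^{j+1}\gamma$, multiplies by the Hoeffding tail at level $2^{j-1}\gamma$, sums the resulting geometric-times-Gaussian series, and sets $\gamma=K/\sqrt N$; the Gaussian tail is what makes the integral against the uniform law converge without a logarithm. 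Your outline would be complete if you replaced Markov by this conditional Hoeffding bound and carried out the peeling (or the equivalent continuous integration of the Gaussian tail against the law of $\widetilde G(Z)$).
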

\begin{proof}
We define, for $\gamma > 0$ and $k \in \{1, \ldots K\}$
\begin{eqnarray*}
{A}^k_0 & = & \left\lbrace\left|\widetilde{G}(\hat{f}_k(X)) -\beta\right| \leq \gamma\right\rbrace\\
{A}^k_j & = & \left\{2^{j-1}\gamma  < |\widetilde{G}(\hat{f}_k(X)) -\beta| \leq 2^{j} \gamma \right\} , \;\; j \geq 1.
\end{eqnarray*}
Since, for every $k$, the events $({A}^k_j)_{j\geq 0}$ are mutually exclusive, we deduce 
\begin{multline}
\label{eq:eqDec}
\sum_{k = 1}^K \mathbf{P}\left(|\hat{G}(\hat{f}_k(X) - \widetilde{G}(\hat{f}_k(X))| \geq |\widetilde{G}(\hat{f}_k(X)) -\beta|
\right) = \\
\sum_{k = 1}^K \sum_{j\geq 0} \mathbf{P}\left(|\hat{G}(\hat{f}_k(X) - \widetilde{G}(\hat{f}_k(X))| \geq |
\widetilde{G} (\hat{f}_k(X)) -\beta| , A_j^k\right).
\end{multline}
Now, we consider a random variable $\varepsilon$ uniformly distributed on $\{1, \ldots, K\}$ and independent of $\mathcal{D}_n$ and $X$.
Conditional on $\mathcal{D}_n$ and under Assumption $(A2)$, we apply Lemma~\ref{lemme:lm1} with
$Z_k = \hat{f}_k(X)$, $Z = \sum_{k = 1}^K Z_k \1_{\{\varepsilon = k\}}$ and then obtain that
$\widetilde{G}(Z)$ is uniformly distributed on $[0,K]$. Therefore, for all $j \geq 0$ and $\gamma > 0$, we deduce
\begin{eqnarray*}
 \dfrac{1}{K} \sum_{k=1}^K \mathbb{P}_X\left(|\widetilde{G}(\hat{f}_k(X)) - \beta | \leq 2^j \gamma | \mathcal{D}_n\right) & = & 
\mathbb{P}_X\left(|\widetilde{G}(Z) - \beta | \leq 2^j \gamma | \mathcal{D}_n\right) \\
& \leq & \dfrac{2^{j+1} \gamma}{K}.
\end{eqnarray*}
Hence, for all $j \geq 0$, we obtain
\begin{equation}
\label{eq:eqEns}
\sum_{k = 1}^K \mathbf{P}(A_j^k) \leq 2^{j+1} \gamma.
\end{equation}
Next, we observe that for all $j \geq 1$
\begin{multline}
\label{eq:eqAux}
\sum_{k = 1}^K \mathbf{P}\left(|\hat{G}(\hat{f}_k(X) - \widetilde{G}(\hat{f}_k(X))| \geq |
\widetilde{G} (\hat{f}_k(X)) -\beta| , A_j^k\right)  \leq \\
\sum_{k = 1}^K \mathbb{E}_{(\mathcal{D}_n, X)}\left[\mathbb{P}_{\mathcal{D}_N}\left(|\hat{G}(\hat{f}_k(X)) - \widetilde{G}(\hat{f}_k(X))| 
\geq 2^{j-1}\gamma |\mathcal{D}_n, X\right)\1_{A_j^k} \right].
\end{multline} 
Now, since conditional on $(\mathcal{D}_n, X)$,  $\hat{G}(\hat{f}_k(X))$ is an empirical mean of i.i.d random variables 
of common mean $\widetilde{G}(\hat{f}_k(X)) \in [0,K]$, we deduce from Hoeffding's inequality that
\begin{equation*}
\mathbb{P}_{\mathcal{D}_N}\left(|\hat{G}(\hat{f}_k(X)) - \widetilde{G}(\hat{f}_k(X))| 
\geq 2^{j-1}\gamma |\mathcal{D}_n, X\right) \leq 2\exp\left(-\dfrac{N\gamma^2 2^{2j-1}}{K^2}\right).
\end{equation*}
Therefore, from Inequalities~\eqref{eq:eqDec},~\eqref{eq:eqEns} and~\eqref{eq:eqAux}, we get
\begin{multline*}
\sum_{k = 1}^K \mathbf{P}\left(|\hat{G}(\hat{f}_k(X) - \widetilde{G}(\hat{f}_k(X))|  \geq  |\widetilde{G}(\hat{f}_k(X)) -\beta|\right)\\ 
\leq  \sum_{k = 1}^K \mathbf{P}\left(A_0^k\right) + \sum_{j \geq  1} 
2\exp\left(-\dfrac{N\gamma^2 2^{2j-1}}{K^2}\right)\left(\sum_{k=1}^K\mathbf{P}\left(A_j^k\right)\right) \\ 
 \leq  2\gamma + \gamma \sum_{j \geq 1} 2^{j+2} \exp\left(-\dfrac{N\gamma^2 2^{2j-1}}{K^2}\right).\\
\end{multline*}
Finally, choosing $\gamma = \dfrac{K}{\sqrt{N}}$ in the above inequality, we finish the proof of the lemma.    
\end{proof}

\subsection{Proof of Proposition~\ref{prop:prop1}}
\label{subsec:proofpropOracle}

Let $\beta > 0$ and  $\Gamma$ be a confidence set such that $\mathcal{I}(\Gamma) = \beta$.
First, we note that the following decomposition holds
\begin{multline*}
\mathcal{R}(\Gamma) - \mathcal{R}(\Gamma_{\beta}^{*})  =  \sum_{j = 1}^K \sum_{k = 1}^K\mathbb{E}\left[ \sum_{l = 1}^{K} (\1_{\{Y \notin \Gamma(X)\}}- 
                                                                       \1_{\{Y \notin \Gamma_{\beta}^*(X)\}})\1_{\{Y = l\}} \1_{\{|\Gamma(X)| = k\}} 
                                                                       \1_{\{| \Gamma_{\beta}^*(X)| = j\}}\right]\\
                                                       =  \sum_{j = 1}^K \sum_{k = 1}^K \mathbb{E}\left[\sum_{l=1}^{K} (\1_{\{l \notin \Gamma(X)\}}- \1_{\{l \notin 
                                                      \Gamma_{\beta}^*(X)\}}) p_{l}(X) \1_{\{|\Gamma(X)| = k\}} \1_{\{|\Gamma_{\beta}^*(X)| = j\}} \right]\\
                                                                           =  \sum_{j = 1}^K \sum_{k = 1}^K \mathbb{E}\left[\sum_{l=1}^K (\1_{\{l \in \Gamma_{\beta}^*(X) 
                                                                          \setminus \Gamma(X)\}}                                                                                                                                                                                                                                                                                                                                                                                                                                     
                                                                          -  \1_{\{l \in \Gamma(X) \setminus \Gamma^*_{\beta}(X)\}}) p_{l}(X) \1_{\{|\Gamma(X)| = k,
                                                                          | \Gamma_{\beta}^*(X)| = j\}} \right],                                                                      
\end{multline*}
where we conditioned by $X$ to get the second equality.
From the last decomposition and with 
\begin{eqnarray*}
\mathbb{E}\left[|\Gamma(X)|\right] & = & \mathbb{E}\left[|\Gamma_{\beta}^*(X)|\right] \\
                                   & = &   \mathbb{E}\left[ \sum_{j = 1}^K \sum_{k = 1}^K k \1_{\{|\Gamma(X)| = k\}} \1_{\{|\Gamma_{\beta}^*(X)| = j\}} \right] \\
                                   & = & \mathbb{E}\left[ \sum_{j = 1}^K \sum_{k = 1}^K j \1_{\{|\Gamma(X)| = k\}} \1_{\{|\Gamma_{\beta}^*(X)| = j\}}\right] ,
\end{eqnarray*}
we can express the excess risk as the sum of two terms:

\begin{multline}
\label{eq:eqprop1}
\mathcal{R}(\Gamma) - \mathcal{R}(\Gamma_{\beta}^{*}) = \sum_{j = 1}^K \sum_{k = 1}^K \mathbb{E}\left[\left(\sum_{l=1}^K \1_{\{l \in \Gamma_{\beta}^*(X) \setminus 
\Gamma(X)\}}                                                                                                                                                                                                                                                                                                                                                                                             
                                                                                                        p_{l}(X)- j G^{-1}(\beta)\right) \1_{\{|\Gamma(X)| = k\}}
                                                                                                         \1_{\{|\Gamma_{\beta}^*(X)| = j\}} \right] \\
  +        \sum_{j = 1}^K \sum_{k = 1}^K \mathbb{E}\left[\left(k G^{-1}(\beta) - \sum_{l=1}^K \1_{\{l \in \Gamma(X) \setminus \Gamma_{\beta}^*(X)\}}                                                                                                                                                                                                                                                                                                                                                                                             
                                                                                                        p_{l}(X)\right)  \1_{\{|\Gamma(X)| = k\}}
                                                                                                         \1_{\{|\Gamma_{\beta}^*(X)| = j\}} \right].                                                                                                  
\end{multline}
Now, for $j,k \in \{1, \ldots, K\}$ on the event $\{|\Gamma(X)| = k, |\Gamma_{\beta}^*(X)| = j\}$, we have
\begin{equation*}
k  =  \sum_{l = 1}^K  \1_{\{l \in \Gamma(X) \setminus \Gamma_{\beta}^*(X)\}} + \sum_{l = 1}^K \1_{\{l \in \Gamma(X) \cap \Gamma_{\beta}^*(X)\}},
\end{equation*}
and
\begin{equation*}
j  =   \sum_{l = 1}^K  \1_{\{l \in \Gamma_{\beta}^*(X) \setminus \Gamma(X)\}} + \sum_{l = 1}^K \1_{\{l \in \Gamma(X) \cap \Gamma_{\beta}^*(X)\}}.                                                                                                                                                                                                                                                                                                                                                                                               
\end{equation*}
Therefore, since
\begin{equation*} 
l \in \Gamma_{\beta}^* \Leftrightarrow p_{l}(X) \geq G^{-1}(\beta), 
\end{equation*}
Equality~\eqref{eq:eqprop1} yields the result.


\subsection{Proof of Theorem~\ref{theo:theo1}}
\label{subsec:proofTheo1}
First we recall that $\hat{f}_n = (\hat{f}_{n,1}, \ldots, \hat{f}_{n,K})$ is a sequence of score functions and $\delta_n\in \mathbb{R}$ is such that $G_{\hat{f}_n}(-\delta_n) =\beta$.
We suppress the dependence on $n$ to simplify notation and write $\hat{f}= (\hat{f}_{1}, \ldots, \hat{f}_{K})$ and $\delta$ for $\hat{f}$ and $\delta_n$ respectively.
Moreover, since there is no doubt, we also suppress everywhere the dependence on $X$.
We also define the events
\begin{equation}
\label{eq:proofSetBk}
B_{k} = \{\hat{f}_{k} \in (-\delta,-\delta^{*}) \;\; {\rm or} \;\; \hat{f}_{k} \in 
(-\delta^{*},-\delta)\},
\end{equation}
for $k = 1, \ldots, K$.
We aim at controlling the excess risk $\Delta \mathcal{R}(\Gamma_{\hat{f},\delta})$.
Since the risk $\mathcal{R}$ is decomposable, it is convenient to introduce ``marginal excess risks'':
\begin{equation*}
\Delta \mathcal{R}^{k}(\Gamma_{f,\delta}) = \1_{\{k\in (\Gamma_{\hat{f}, \delta} \, \Delta  \, \Gamma_\beta^{*})\}} |p_k - G^{-1}(\beta)|.
\end{equation*}
%
%
Recall also that by convexity of the loss function $\phi$, we have that for all $x,y\in \mathbb{R}$,
\begin{equation}
\label{eq:eqUtile}
\phi(y)-\phi(x) \geq \phi'(x)(y-x).
\end{equation}
Assume that $\hat{f}_{k} \leq -\delta$ and $\hat{f}_{k} \leq -\delta^{*}\leq f_{k}^{*}$, which translates as $p_k-G^{-1}(\beta) \geq 0$, we get thanks to~\eqref{eq:eqUtile}
\begin{equation*}
p_k(\phi(\hat{f}_{k})-\phi(-\delta)) - (1-p_k)(\phi(-\hat{f}_{k})-\phi(\delta))\geq (\phi'(\delta^*)-\phi'(-\delta^*))(p_k-
G^{-1}(\beta))(\hat{f}_{k} + \delta^{*}) \geq 0 .
\end{equation*}
Similarly, 
if $\hat{f}_{k} \geq -\delta$ and $\hat{f}_{k} \geq -\delta^{*}\geq f_{k}^{*}$, that is, $p_k-G^{-1}(\beta) \leq 0$ we have
\begin{equation*}
p_k(\phi(\hat{f}_{k})-\phi(-\delta)) - (1-p_k)(\phi(-\hat{f}_{k})-\phi(\delta)) \geq 0. 
\end{equation*}
Note that in the following two cases
\begin{eqnarray*}
 \bullet \quad  \hat{f}_{k} \leq \ - \delta & {\rm and} & f_{k}^{*} \leq -\delta^{*};\\ 
\bullet \quad \hat{f}_{k} \geq \ - \delta & {\rm and} & f_{k}^{*} \geq -\delta^{*},
\end{eqnarray*}
we have $\Delta \mathcal{R}^{k}(\Gamma_{\hat{f},\delta}) = 0$.
Therefore, from the above inequalities, on $B_{k}^{c}$ and by assumption~\eqref{eq:Zhansgg} we get 
\begin{equation}
\label{eq:defout}
\1_{B_k^c}\1_{\{k\in (\Gamma_{\hat{f}, \delta} \, \Delta  \, \Gamma_\beta^{*})\}} |p_k - G^{-1}(\beta)|^s \leq C(\Delta R_{\phi}^k(\hat{f})).
\end{equation}
Therefore, since $s \geq 1$, we have
\begin{eqnarray*}
\left(\mathbb{E}\left[\sum_{k = 1}^K \1_{B_k^c \cap \{k\in (\Gamma_{\hat{f}, \delta} \, \Delta  \, \Gamma_\beta^{*})\}} |p_k - G^{-1}(\beta)|\right]\right)^s &\leq & \dfrac{1}{K} \sum_{k = 1}^K \mathbb{E}\left[\1_{B_k^c \cap \{k \in \Gamma_{\hat{f},\delta} \Delta 
\Gamma_{\beta}^{*} \}} K^s |p_k-G^{-1}(\beta)|^s \right] \\
& \leq &
 C K^{s-1} \Delta R_{\phi}(\hat{f}).  
\end{eqnarray*}
Moreover 
\begin{eqnarray*}
\mathbb{E} \left[\sum_{k = 1}^K \1_{B_k}\1_{\{k\in (\Gamma_{\hat{f}, \delta} \, \Delta  \, \Gamma_\beta^{*})\}} |p_k - G^{-1}(\beta)| \right] & \leq &
 \sum_{k = 1}^{K} \mathbb{P}(B_k)  \\
 & \leq & \mathbb{E}\left[|G_{\hat{f}}(-\delta) -G_{\hat{f}}(-\delta^{*})|\right] \\
 & \leq &  \mathbb{E}\left[|G_{f^{*}}(-\delta^{*}) -G_{\hat{f}}(-\delta^{*})|\right].
\end{eqnarray*}
Finally, we get the following bound
\begin{equation*}
\Delta \mathcal{R} (\Gamma_{\hat{f},\delta}) \leq K^{\frac{s-1}{s}}  \Delta R_{\phi}(\hat{f})^{1/s} + \mathbb{E}\left[|G_{f^{*}}(-\delta^{*}) -G_{\hat{f}}(-\delta^{*})|\right].
\end{equation*}
Now we observe that
\begin{equation}
\label{eq:eqDecompRisk1}
|\1_{\{\hat{f}_{k} \geq -\delta^{*}\}} - \1_{\{f_k^{*} \geq -\delta^{*}\}} | \leq \1_{\{|p_k - G^{-1}(\beta)|^s \leq 
C \Delta R_{\phi}^{k}(\hat{f})\}}.
\end{equation}
Therefore, for each $\gamma > 0$, we have
\begin{eqnarray*}
\mathbb{E}_{X}\left[|G_{f^{*}}(-\delta^{*}) -G_{\hat{f}}(-\delta^{*})|\right]  & \leq & 
\sum_{k = 1}^K \mathbb{P}_{X}\left(|p_k(X) - G^{-1}(\beta)|^s \leq C \Delta R_{\phi}^{k}(\hat{f})\right)\\
& \leq & \sum_{k=1}^K \mathbb{P}_{X}\left(|p_k(X) - G^{-1}(\beta)| \leq \gamma^{1/s} \right)
+  \mathbb{P}_{X}\left(\gamma \leq C \Delta R_{\phi}^{k}(\hat{f})\right).\\
\end{eqnarray*}
Now using Markov Inequality, we have that
\begin{eqnarray*}
\sum_{k=1}^K \mathbb{P}_{X}\left(\gamma \leq  C \Delta R_{\phi}^{k}(\hat{f})\right) & \leq &
\dfrac{C}{\gamma} \sum_{k =1}^K \mathbb{E}_{X} \left[ \Delta R_{\phi}^{k}(\hat{f})\right]\\ 
& \leq & \dfrac{C \Delta R_{\phi}(\hat{f})}{\gamma}.\\
\end{eqnarray*}
The above inequality yields
\begin{equation}
\label{eq:eqMajoration}
\mathbb{E}_{X}\left[|G_{f^{*}}(-\delta^{*}) -G_{\hat{f}}(-\delta^{*})|\right] \leq
 \dfrac{C \Delta R_{\phi}(\hat{f})}{\gamma} + \sum_{k=1}^K \mathbb{P}_{X}\left(|p_k(X) - G^{-1}(\beta)| \leq 
 \gamma^{1/s} \right).
\end{equation}
Hence, with Equation~\eqref{eq:eqDecompRisk1}, we get
\begin{equation*}
\Delta \mathcal{R} (\Gamma_{\hat{f},\delta}) \leq K^{\frac{s-1}{s}}  \Delta R_{\phi}(\hat{f})^{1/s} + \dfrac{C \Delta R_{\phi}(\hat{f})}{\gamma} + \sum_{k=1}^K \mathbb{P}_{X}\left(|p_k(X) - 
G^{-1}(\beta)| \leq \gamma^{1/s} \right).
\end{equation*}
The term $\sum_{k=1}^K \mathbb{P}_{X}\left(|p_k(X) - G^{-1}(\beta)| \leq \gamma^{1/s} \right) \rightarrow 0$ when $\gamma \rightarrow 0$, given that the distribution function of the $p_k's$ are continuous. Then using the convergence in distribution of $\Delta R_{\phi}(\hat{f})$ to zero, the last inequality ensures the desired result. 

\subsection{Proof of Proposition~\ref{prop:confSetSize}}
\label{subsec:proofPropSize}

For any $\beta \in (0,K)$, and conditional on $\mathcal{D}_n$ we define 
\begin{equation}
\label{eq:eqtldeSeuil}
\widetilde{G}^{-1}(\beta) = \inf\{t\in \mathbb{R} : \  \widetilde{G}(t) \leq \beta \}.
\end{equation}
We note that Assumption $(A2)$ ensures that $t \mapsto \widetilde{G}(t)$ is continuous and then
\begin{equation*}
\widetilde{G}(\widetilde{G}^{-1}(\beta)) = \beta.
\end{equation*}
Now, we have 
\begin{eqnarray*}
|\widetilde{\Gamma}_{\beta}(X)| & = & \sum_{k=1}^K \1_{\{\widetilde{G}(\hat{f}_k(X)) \leq \beta\}}\\
                            & = & \sum_{k=1}^K \1_{\{\hat{f}_k(X) \geq \widetilde{G}^{-1}(\beta)\}}.
\end{eqnarray*}
Hence, the last equation implies that
\begin{equation}
\label{eq:eqtldeCardSeuil}
\mathbb{E}_{X}\left[|\widetilde{\Gamma}_{\beta}(X)||\mathcal{D}_n\right] = \sum_{k = 1}^K \mathbb{P}_X\left(\hat{f}_k(X) \geq \widetilde{G}^{-1}(\beta)|\mathcal{D}_n\right) = \widetilde{G}
(\widetilde{G}^{-1}
(\beta)) = \beta.
\end{equation}
Therefore, we obtain $\mathbf{E}\left[|\widetilde{\Gamma}_{\beta}(X)|\right] = \beta$.
Also, we can write
\begin{eqnarray*}
\left|\mathbf{E}\left[\left|\hat{\Gamma}_{\beta}(X)\right|\right] - \beta \right| & \leq & \left|\mathbf{E}\left[|\hat{\Gamma}_{\beta}(X)| - |\widetilde{\Gamma}_{\beta}(X)|\right]\right|\\
                                                      & \leq &  \left|\mathbf{E}\left[\sum_{k = 1}^K \left(\1_{\{\hat{G}(\hat{f}_k(X)) \leq \beta\}}
                                                                         - \1_{\{\widetilde{G}(\hat{f}_k(X)) \leq \beta\}}\right)\right] \right|\\
                                                      & \leq & \mathbf{E}\left[|\hat{\Gamma}_{\beta}(X) \; \Delta \; \widetilde{\Gamma}_{\beta}(X)|\right] \\
                                                      & \leq & \sum_{k = 1}^K \mathbf{E}\left[|\1_{\{\hat{G}(\hat{f}_k(X)) \leq \beta\}}
                                                                         - \1_{\{\widetilde{G}(\hat{f}_k(X)) \leq \beta\}}|\right]\\
                                                      &\leq & \sum_{k=1}^K \mathbf{P}\left(|\hat{G}(\hat{f}_k(X) - \widetilde{G}(\hat{f}_k(X))| \geq |\widetilde{G}
                                                      (\hat{f}_k(X)) 
                                                      -\beta|\right).
\end{eqnarray*} 
Hence, applying Lemma~\ref{lem:lemFondamental} in the above inequality, we obtain the desired result.

\subsection{Proof of Theorem~\ref{theo:vitesse1}}
When there is no doubt, we suppress the dependence on $X$.
First, let us state a intermediate result that is also needed to prove the theorem.
\begin{lm}
\label{lm:decompRisk}
Consider $\Gamma_{\hat f,\delta}$ the confidence set based on the score function $\hat f$ with information $\beta$ (that is, $G_{\hat{f}}(-\delta) = \beta$).
Under assumptions $M_{\alpha}^k$, the following holds
\begin{equation*}
\Delta \mathcal R(\Gamma_{\hat{f},\delta}) \leq C(\alpha,s)\left\{ K^{1-1/(s +\lambda -\lambda s)} \Delta R_{\phi}(\hat{f})^{1/(s+\lambda -\lambda s)}
+  K^{1-\lambda/(s+\lambda -\lambda s)} \Delta R_{\phi}(\hat{f})^{\lambda/(s+\lambda -\lambda s)} \right\},
\end{equation*}
where $\lambda = \frac{\alpha}{\alpha+1}$ and $C(\alpha,s)$ is non negative constant which depends only on  $\alpha$ and $s$. 
\end{lm}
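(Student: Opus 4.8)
The plan is to revisit the analysis carried out in the proof of Theorem~\ref{theo:theo1}, but this time exploit the margin conditions $M_\alpha^k$ quantitatively rather than merely using the continuity of the c.d.f.\ of $p_k(X)$. Recall that in that proof we obtained, for every $\gamma>0$,
\[
\Delta \mathcal R(\Gamma_{\hat f,\delta}) \leq K^{\frac{s-1}{s}}\,\Delta R_\phi(\hat f)^{1/s} + \frac{C\,\Delta R_\phi(\hat f)}{\gamma} + \sum_{k=1}^K \mathbb{P}_X\!\left(|p_k(X)-G^{-1}(\beta)|\leq \gamma^{1/s}\right),
\]
where the first term came from bounding the ``on $B_k^c$'' contributions via~\eqref{eq:Zhansgg} and Jensen, and the last two came from controlling $\mathbb{E}[|G_{f^*}(-\delta^*)-G_{\hat f}(-\delta^*)|]$ through~\eqref{eq:eqMajoration}. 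The first step is therefore to simply quote this inequality as the starting point; no new argument is needed up to here.

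Next I would insert the margin condition: under $M_\alpha^k$ we have $\mathbb{P}_X(0<|p_k(X)-G^{-1}(\beta)|\leq \gamma^{1/s})\leq c_2 \gamma^{\alpha/s}$, and since the c.d.f.\ of $p_k(X)$ is continuous the event $\{p_k(X)=G^{-1}(\beta)\}$ has probability zero, so in fact $\sum_{k=1}^K \mathbb{P}_X(|p_k(X)-G^{-1}(\beta)|\leq \gamma^{1/s})\leq c_2 K \gamma^{\alpha/s}$. Plugging this in gives
\[
\Delta \mathcal R(\Gamma_{\hat f,\delta}) \leq K^{\frac{s-1}{s}}\,\Delta R_\phi(\hat f)^{1/s} + \frac{C\,\Delta R_\phi(\hat f)}{\gamma} + c_2 K \gamma^{\alpha/s}.
\]
The second step is to optimize over $\gamma>0$: the two $\gamma$-dependent terms are $C\Delta R_\phi(\hat f)/\gamma$ and $c_2 K\gamma^{\alpha/s}$, which are balanced (up to constants depending on $\alpha,s,c_2$) by the choice $\gamma \asymp \big(\Delta R_\phi(\hat f)/K\big)^{s/(\alpha+s)}$. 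Substituting this value, both terms become of order $K^{\alpha/(\alpha+s)}\Delta R_\phi(\hat f)^{\alpha/(\alpha+s)}$. Writing $\lambda = \alpha/(\alpha+1)$ one checks the algebraic identity $\alpha/(\alpha+s) = \lambda/(s+\lambda-\lambda s)$ and $1-\alpha/(\alpha+s) = s/(s+\lambda-\lambda s) \cdot$ (the appropriate combination), so this contribution matches the second summand $K^{1-\lambda/(s+\lambda-\lambda s)}\Delta R_\phi(\hat f)^{\lambda/(s+\lambda-\lambda s)}$ claimed in the lemma; meanwhile the leftover term $K^{(s-1)/s}\Delta R_\phi(\hat f)^{1/s}$ is dominated by (or rewritten as) the first summand $K^{1-1/(s+\lambda-\lambda s)}\Delta R_\phi(\hat f)^{1/(s+\lambda-\lambda s)}$ after checking the corresponding exponent identities. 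Collecting the two contributions and absorbing all numerical factors into a constant $C(\alpha,s)$ yields the stated bound.

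The main obstacle I anticipate is purely bookkeeping: verifying that the exponents $1/(s+\lambda-\lambda s)$ and $\lambda/(s+\lambda-\lambda s)$ produced by the $\gamma$-optimization coincide with the cleaner forms $\alpha/(\alpha+s)$ appearing elsewhere (e.g.\ in Theorem~\ref{theo:vitesse1}), and that the residual term $K^{(s-1)/s}\Delta R_\phi(\hat f)^{1/s}$ from~\eqref{eq:eqMajoration} does not spoil the rate — this requires either showing it is dominated by the optimized term when $\Delta R_\phi(\hat f)$ is small, or keeping it explicitly as the first summand in the statement, which is what the lemma does. A secondary subtlety is making sure the Markov-inequality step and the balancing choice of $\gamma$ remain valid without any lower bound on $\Delta R_\phi(\hat f)$ (the case $\Delta R_\phi(\hat f)=0$ being trivial since then the excess risk vanishes by~\eqref{eq:Zhansgg}). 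Once these exponent identities are nailed down the rest is routine.
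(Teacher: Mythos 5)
Your treatment of the second summand is essentially the paper's: the paper splits the excess risk over the events $S_k=B_k^c\cap\{k\in\Gamma_{\hat f,\delta}\,\Delta\,\Gamma_\beta^*\}$ and $T_k=B_k\cap\{k\in\Gamma_{\hat f,\delta}\,\Delta\,\Gamma_\beta^*\}$, bounds the $T_k$-part by $\mathbb{E}_X[|G_{f^*}(-\delta^*)-G_{\hat f}(-\delta^*)|]$, applies~\eqref{eq:eqMajoration} together with ${\rm M}_\alpha^k$, and optimizes in $\gamma$ exactly as you do; your identity $\alpha/(\alpha+s)=\lambda/(s+\lambda-\lambda s)$ is correct since $s+\lambda-\lambda s=(s+\alpha)/(\alpha+1)$.

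The gap is in the first summand. You keep the term $K^{(s-1)/s}\Delta R_\phi(\hat f)^{1/s}$ inherited from the proof of Theorem~\ref{theo:theo1} and assert it is ``dominated by (or rewritten as)'' the lemma's first summand $K^{1-1/(s+\lambda-\lambda s)}\Delta R_\phi(\hat f)^{1/(s+\lambda-\lambda s)}$. This fails: since $1/(s+\lambda-\lambda s)=(\alpha+1)/(\alpha+s)>1/s$ for every $s>1$ and $\alpha>0$, in the regime of interest $\Delta R_\phi(\hat f)\le 1$ one has $\Delta R_\phi(\hat f)^{(\alpha+1)/(\alpha+s)}\le\Delta R_\phi(\hat f)^{1/s}$ (and also $K^{(s-1)/(\alpha+s)}\le K^{(s-1)/s}$), so the domination goes in the \emph{wrong} direction: the lemma's first summand is strictly smaller than your leftover term, and no exponent bookkeeping can convert one into the other. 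What is missing is that the margin condition must also be applied to the $B_k^c$ contribution. The paper does this via a self-bounding argument: writing $\Delta\mathcal R_1=\mathbb{E}[\sum_k\1_{S_k}|p_k-G^{-1}(\beta)|]$, splitting on $\{|p_k-G^{-1}(\beta)|\gtrless\varepsilon\}$ gives $\Delta\mathcal R_1\le C\varepsilon^{1-s}\Delta R_\phi(\hat f)+\varepsilon\sum_k\mathbb{P}(S_k)$, then Lemma~\ref{lm:margin} (the analogue of Lemma~5 in Bartlett--Jordan--McAuliffe, which is itself a consequence of ${\rm M}_\alpha^k$) yields $\sum_k\mathbb{P}(S_k)\le C(\alpha)(K^{1/\alpha}\Delta\mathcal R_1)^{\alpha/(\alpha+1)}$, and optimizing $\varepsilon$ and solving the resulting inequality for $\Delta\mathcal R_1$ produces the improved exponent $1/(s+\lambda-\lambda s)$. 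Without this step your argument only proves the weaker bound with exponent $1/s$ in place of $1/(s+\lambda-\lambda s)$, which is not the statement of the lemma (they coincide only when $s=1$, whereas the relevant losses have $s=2$).
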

\begin{proof}
For each  $k = 1, \ldots, K$, we define the following events 
$S_k = B_k^c \cap  \{k\in (\Gamma_{\hat{f}, \delta} \, \Delta  \, \Gamma_\beta^{*})\}$ and $T_k = B_k \cap  \{k\in (\Gamma_{\hat{f}, \delta} \, \Delta  \, \Gamma_\beta^{*})\}$, where the $B_k$'s are the events given in Eq.~\eqref{eq:proofSetBk}.
Now we observe that
\begin{equation*}
\Delta \mathcal R(\Gamma_{\hat{f},\delta}) =\mathbb{E}\left[\sum_{k =1}^K \1_{S_k}  |p_k - G^{-1}(\beta)|\right] +  \mathbb{E}\left[\sum_{k =1}^K \1_{T_k}  |p_k - G^{-1}(\beta)|\right]
= \Delta \mathcal R_1(\Gamma_{\hat{f},\delta}) + \Delta  \mathcal R_2(\Gamma_{\hat{f},\delta}),
\end{equation*}
where
\begin{eqnarray*}
\Delta \mathcal R_1(\Gamma_{\hat{f},\delta}) & = & \mathbb{E}\left[\sum_{k =1}^K \1_{S_k}  |p_k - G^{-1}(\beta)|\right], \\
\Delta  \mathcal R_2(\Gamma_{\hat{f},\delta}) & = & \mathbb{E}\left[\sum_{k =1}^K \1_{T_k}  |p_k - G^{-1}(\beta)|\right].
\end{eqnarray*}
The end of the proof consists in controlling each of these two terms.
Let us first consider $\Delta \mathcal R_1(\Gamma_{\hat{f},\delta})$. For $\varepsilon > 0$, we have
\begin{eqnarray}
\label{eq:proofFinde}
\Delta  \mathcal R_1(\Gamma_{\hat{f},\delta}) & = & \mathbb{E}\left[\sum_{k = 1}^K \1_{S_k} |p_k - G^{-1}(\beta)| \left( \1_{\{|p_k - G^{-1}(\beta)| \geq \varepsilon\}}
+ \1_{\{|p_k - G^{-1}(\beta)| \leq \varepsilon\}} \right)\right] \nonumber \\
& \leq & \mathbb{E}\left[\sum_{k = 1}^K \1_{S_k} \varepsilon^{1-s} |p_k - G^{-1}(\beta)|^s\right] + \varepsilon \sum_{k=1}^K \mathbb{P}(S_k) \nonumber \\
& \leq & C \varepsilon^{1-s} \Delta R_{\phi}(\hat{f}) + \varepsilon \sum_{k=1}^K \mathbb{P}(S_k),
\end{eqnarray}
where  we used the assumption~\eqref{eq:Zhansgg} and more precisely~\eqref{eq:defout} to deduce the last inequality. 
To control $\sum_{k=1}^K \mathbb{P}(S_k)$,
we require the following result that is a direct application of Lemma~5 in~\cite{BartlettJordanMcauliffe06}.
\begin{lm}
\label{lm:margin}
Under the assumptions $M_{\alpha}^k$ we have
\begin{equation*}
\sum_{k =1}^K \mathbb{P}\left(S_k\right)  \leq
 C(\alpha) \left(K^{1/\alpha} \Delta \mathcal R_1(\Gamma_{\hat f,\delta}) \right)^{\frac{\alpha}{\alpha+1}},
\end{equation*}
 where $ C(\alpha) >0$ is a constant that depends only on $\alpha$.
\end{lm}
\begin{proof}
The proof of this result relies on the following simple fact: for all $\varepsilon > 0$ 
\begin{eqnarray*}
\mathbb{E}\left[\1_{S_k} |p_k - G^{-1}(\beta)|\right]  & \geq & 
\varepsilon \mathbb{E}\left[\1_{S_k} \1_{\{|p_k - G^{-1}(\beta)| \geq \varepsilon\}} \right]\\
& \geq & \varepsilon \left[\mathbb{P}\left(S_k\right) - c_2 \varepsilon^{\alpha} \right].
\end{eqnarray*}
Choosing $\varepsilon = \left(\frac{1}{c_2K(\alpha + 1)}\sum_{k =1}^K \mathbb{P}\left(S_k)\right) \right)^{1/\alpha}$ we get the lemma.
\end{proof}
We go back to the proof of Lemma~\ref{lm:decompRisk}.
Applying Lemma~\ref{lm:margin} to~\eqref{eq:proofFinde}, we get
\begin{equation*}
\Delta \mathcal R_1(\Gamma_{\hat f,\delta}) \leq C\varepsilon^{1-s} \Delta R_{\phi}(\hat f) + \varepsilon C(\alpha) \left(K^{1/\alpha} \Delta \mathcal R_1(\Gamma_{\hat f,\delta}) \right)^{\frac{\alpha}
{\alpha+1}}.
\end{equation*}
Choosing $\varepsilon = \frac{s-1}{s  C(\alpha)} K^{(\lambda - 1)} \Delta \mathcal R_1(\Gamma_{\hat f,\delta})^{(1-\lambda)}$, we obtain
\begin{equation}
\label{eq:eqlmMajoration}
\Delta \mathcal R_1(\Gamma_{\hat f,\delta}) \leq  C_1(\alpha,  s ) K^{1-1/(s-\lambda s +\lambda)} \Delta R_{\phi}(\hat f)^{1/(s+\lambda -\lambda s)},
\end{equation}
for a non negative constant $C_1(\alpha,  s )$ that depends only on $\alpha$ and $  s$. 

Let us now focus on the second term, $ \Delta \mathcal R_2(\Gamma_{\hat f,\delta})$. 
Since the assumptions of Theorem~\ref{theo:theo1} are satisfied, we can use Eq.~\eqref{eq:eqMajoration} for any $\gamma > 0$.
Combined with the Margin assumptions $M_\alpha^k$, we obtain
\begin{eqnarray*}
\Delta \mathcal R_2(\Gamma_{\hat f,\delta}) & \leq & \mathbb{E}_{X}\left[|G_{f^{*}}(-\delta^{*}) -G_{{\hat f}}(-\delta^{*})|\right]\\
          & \leq & \frac{C\Delta R_{\phi}({\hat f})}{\gamma} + c_2K\gamma^{\alpha/s}.
\end{eqnarray*}
Therefore, optimizing in $\gamma$, we have
\begin{equation*}
\Delta \mathcal R_2(\Gamma_{\hat f,\delta}) \leq C_2(\alpha,s) K^{1-\lambda/(s+\lambda -\lambda s)} \Delta R_{\phi}(\hat f)^{\lambda/(s+\lambda - \lambda s)},
\end{equation*}
for a non negative constant $C_2(\alpha,  s )$ that depends only on $\alpha$ and $  s$.
The result stated in the lemma is deduced by combining the last equation with Eq.~\eqref{eq:eqlmMajoration} and by setting $C(\alpha,s)= \max\{ C_1(\alpha,s) ;C_2(\alpha,s)\}$.
\end{proof}
We now state another important lemma that describes the behavior of the empirical minimizer of the $\phi$-risk on the class $\mathcal F$:
\begin{lm}
\label{lm:estimPhiRisk}
Let $\bar{f} \in \mathcal{F}$ be the minimizer of $R_{\phi}(f)$ over $\mathcal{F}$.
Under the assumptions of Theorem~\ref{theo:vitesse1}, we have that
\begin{equation*}
\mathbf{E}\left[R_{\phi}(\hat{f_n}) -R_{\phi}(\bar{f})\right] \leq \frac{3KL}{n} + \frac{K C(B,L) \log(N_n)}{n},
\end{equation*}
where $C(B,L)>0$ is a constant that only depends on the constant $B$ given in the Proposition~\ref{prp:crossVal} and on the Lipschitz constant $L$.
\end{lm}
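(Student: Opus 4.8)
The plan is a standard empirical risk minimization argument, upgraded from the slow $\sqrt{\log N_n/n}$ rate to the fast $\log N_n/n$ rate by exploiting the quadratic lower bound on the modulus of convexity. Write $\ell_f(x,y)=\sum_{k=1}^K\phi(z_kf_k(x))$ with $z_k=2\1_{\{y=k\}}-1$, so that $R_\phi(f)=P\ell_f$ and $\hat R_\phi(f)=P_n\ell_f$, where $P$ denotes the law of $(X,Y)$ and $P_n$ the empirical measure on $\mathcal{D}_n$. Since $\hat f_n$ minimizes $\hat R_\phi$ over $\mathcal{F}\ni\bar f$ we have $P_n(\ell_{\hat f_n}-\ell_{\bar f})\le 0$, hence
\[
R_\phi(\hat f_n)-R_\phi(\bar f)=(P-P_n)(\ell_{\hat f_n}-\ell_{\bar f})+P_n(\ell_{\hat f_n}-\ell_{\bar f})\le(P-P_n)(\ell_{\hat f_n}-\ell_{\bar f}),
\]
and it remains to bound this empirical fluctuation.

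The key structural input is the modulus of convexity. As $\mathcal{F}$ is convex and $\bar f$ minimizes $R_\phi$ over $\mathcal{F}$, one has $R_\phi\big(\tfrac{f+\bar f}{2}\big)\ge R_\phi(\bar f)$; plugging $g=\bar f$ into~\eqref{eq:deltaMod} at the scale $\varepsilon^2=\sum_{k}\mathbb{E}_X[(f_k-\bar f_k)^2(X)]$ (the case $\varepsilon=0$ being trivial) and using $\delta(\varepsilon)\ge c_1\varepsilon^2$ yields
\[
\sum_{k=1}^K\mathbb{E}_X\big[(f_k-\bar f_k)^2(X)\big]\le\frac{1}{2c_1}\big(R_\phi(f)-R_\phi(\bar f)\big).
\]
By the $L$-Lipschitz property of $\phi$ and Cauchy--Schwarz this produces the Bernstein-type variance bound $\mathrm{Var}(\ell_f-\ell_{\bar f})\le\mathbb{E}[(\ell_f-\ell_{\bar f})^2]\le\frac{KL^2}{2c_1}\big(R_\phi(f)-R_\phi(\bar f)\big)$, while $\|\ell_f-\ell_{\bar f}\|_\infty\le 2KLB$ since $\|f\|_\infty,\|\bar f\|_\infty\le B$. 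The linearity in $K$ of these two constants is exactly what will make the final bound linear in $K$.

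I would then discretize and localize. Fix a covering of $\mathcal{F}$ by $N_n$ closed $L_\infty$-balls of radius $1/n$ and keep one center per ball; replacing $\hat f_n$, resp.\ $\bar f$, by its nearest center moves $\ell$ pointwise by at most $KL/n$ (each of the $K$ coordinates moving by at most $L/n$), hence moves $P\ell$ and $P_n\ell$ by at most $KL/n$ each. On the finite set of centers, apply Bernstein's inequality to $(P-P_n)(\ell_f-\ell_{\bar f})$ together with a union bound, insert the variance bound, and split the resulting square root via $2\sqrt{ab}\le a+b$; this gives, with probability at least $1-\delta$, simultaneously over all centers $f$,
\[
(P-P_n)(\ell_f-\ell_{\bar f})\le\tfrac12\big(R_\phi(f)-R_\phi(\bar f)\big)+K\,C(B,L,c_1)\,\frac{\log(N_n/\delta)}{n}.
\]
Applying this to the center nearest $\hat f_n$, using $P_n(\ell_{\hat f_n}-\ell_{\bar f})\le0$ and absorbing both the $2KL/n$ discretization error and the $\tfrac12(R_\phi(\hat f_n)-R_\phi(\bar f))$ term to the left, one obtains $R_\phi(\hat f_n)-R_\phi(\bar f)\le\tfrac{3KL}{n}+K\,C(B,L)\,\frac{\log(N_n/\delta)}{n}$ on that event; integrating the tail over $\delta\in(0,1)$ turns $\log(N_n/\delta)$ into $\log N_n+1$ and yields the stated bound in expectation.

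The discretization bookkeeping and the final tail integration are routine. The delicate point is the localized concentration step: one must derive the variance inequality cleanly from the definition~\eqref{eq:deltaMod} of $\delta(\varepsilon)$, and then run Bernstein plus the union bound so that the absolute constant in front of $K\log N_n/n$ depends only on $B$, $L$ (and $c_1$) — which requires carefully tracking how the factor $K$ enters both the variance proxy and the sup-norm of the excess loss.
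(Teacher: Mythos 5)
Your proposal is correct and follows essentially the same route as the paper's proof: the modulus-of-convexity bound yielding the Bernstein variance condition $\mathbb{E}[(\ell_f-\ell_{\bar f})^2]\leq \frac{KL^2}{2c_1}(R_\phi(f)-R_\phi(\bar f))$, discretization on a $1/n$-net in $L_\infty$ costing $O(KL/n)$, Bernstein plus a union bound over the $M_n$ centers, and absorption of half the excess risk before integrating the tail. The paper packages the absorption step by directly bounding $\sup_f\bigl[(R_\phi(f)-R_\phi(\bar f))-2(\hat R_\phi(f)-\hat R_\phi(\bar f))\bigr]$, which is equivalent to your $2\sqrt{ab}\leq a+b$ split.
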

\begin{proof}
%
%
%
%
First, according to Eq.~\eqref{eq:deltaMod}, for each $f \in \mathcal{F}$, we can write
\begin{equation*}
\frac{R_{\phi}(f)+R_{\phi}(\bar{f})}{2} - R_{\phi}\left(\frac{f+\bar{f}}{2}\right) \geq \delta\left(\sqrt{\sum_{k=1}^K\mathbb{E}_{X}\left[(f_k-\bar{f}_k)^2(X)\right]}\right),
\end{equation*}
Hence, by assumption on the modulus of convexity, we deduce
\begin{equation*}
\frac{R_{\phi}(f)+R_{\phi}(\bar{f})}{2} - R_{\phi}\left(\frac{f+\bar{f}}{2}\right)  \geq   c_1 \sum_{k=1}^K\mathbb{E}_{X}\left[(f_k-\bar{f}_k)^2(X)\right]. 
\end{equation*}
Since $R_{\phi}\left(\frac{f+\bar{f}}{2}\right) \geq R_{\phi}(\bar{f})$, we obtain
\begin{equation*}
\sum_{k=1}^K\mathbb{E}_{X}\left[(f_k-\bar{f}_k)^2(X)\right] \leq \frac{1}{2c_1} R_{\phi}(f) - R_{\phi}(\bar{f}).
\end{equation*}
Now, denoting by $h(z,f(x)) = \sum_{k = 1}^ K \phi(z_kf_k(x)) - \phi(z_k\bar{f}_k(x))$, we get the following bound
\begin{eqnarray}
\label{eq:eqvarB}
\mathbb{E}_X\left[h^2(Zf(X))\right]  & \leq & K L^2 \sum_{k=1}^K\mathbb{E}_{X}\left[(f_k-\bar{f}_k)^2(X)\right]  \nonumber \\ 
                                     & \leq & \frac{K L^2}{2c_1} \mathbb{E}_X\left[h(Zf(X))\right],
\end{eqnarray}
where $L$ is the Lipschitz constant $L$ for $\phi$.
On the other hand, we have the following decomposition
\begin{equation*}
R_{\phi}(\hat{f}) - R_{\phi}(\bar{f}) = R_{\phi}(\hat{f}) +  2(\hat{R}_{\phi}(\hat{f}) - \hat{R}_{\phi}(\bar{f})) - 2 (\hat{R}_{\phi}(\hat{f}) - \hat{R}_{\phi}(\bar{f}))
- R_{\phi}(\bar{f}).
\end{equation*}
Also, since $\hat{R}_{\phi}(\hat{f}) - \hat{R}_{\phi}(\bar{f}) \leq 0$, we get
\begin{eqnarray*}
R_{\phi}(\hat{f}) - R_{\phi}(\bar{f}) & \leq &  (R_{\phi}(\hat{f}) - R_{\phi}(\bar{f})) - 2 (\hat{R}_{\phi}(\hat{f}) - \hat{R}_{\phi}(\bar{f})) \\
                                      & \leq & \frac{3KL}{n} +  \sup_{f \in \mathcal{F}_n}  (R_{\phi}({f}) - R_{\phi}(\bar{f})) - 2 (\hat{R}_{\phi}({f}) - \hat{R}_{\phi}(\bar{f})),
\end{eqnarray*}
where $\mathcal{F}_n$ is the $\epsilon$-net of $\mathcal{F}$ w.r.t the $L_\infty$-norm and with $\epsilon = 1/n$.
Now, using Bernstein's Inequality, we have that for all $f \in \mathcal{F}_n$ and $t > 0$
\begin{multline*}
\mathbf{P}\left((R_{\phi}({f}) - R_{\phi}(\bar{f})) - 2 (\hat{R}_{\phi}({f}) - \hat{R}_{\phi}(\bar{f})) \geq t\right)  \leq \\
\mathbf{P}\left(2((R_{\phi}({f}) - R_{\phi}(\bar{f})) - (\hat{R}_{\phi}({f}) -\hat{R}_{\phi}(\bar{f})) \geq t + R_{\phi}({f}) - R_{\phi}(\bar{f})\right)\\
 \leq  \exp\left(-\dfrac{n(t+\mathbb{E}\left[h(Z,f(X))\right])^2/8}{\mathbb{E}\left[h^2(Z,f(X))\right])+(2KLB/3)(t+\mathbb{E}\left[h(Z,f(X))\right]))}\right).              
\end{multline*}
Using Eq.~\eqref{eq:eqvarB}, we get for all $f \in \mathcal{F}_n$
\begin{equation*}
\mathbf{P}\left((R_{\phi}({f}) - R_{\phi}(\bar{f})) - 2 (\hat{R}_{\phi}({f}) - \hat{R}_{\phi}(\bar{f})) \geq t\right)
\leq \exp\left(-\frac{nt}{8(KL^2/(2c_1) +  KLB/3)}\right),
\end{equation*}
Therefore, using a union bound argument, and then integrating we deduce that
\begin{eqnarray*}
\mathbf{E}\left[R_{\phi}(\hat{f}) - R_{\phi}(\bar{f})\right] &\leq & \frac{3KL}{n} +  \mathbf{E}\left[\sup_{f \in \mathcal{F}_n} (R_{\phi}({f}) - R_{\phi}(\bar{f})) - 2 
(\hat{R}_{\phi}({f}) - \hat{R}_{\phi}(\bar{f}))\right]\\
& \leq & \frac{3KL}{n} + \frac{K C( B,L)  \log(M_n)}{n}.
\end{eqnarray*}
%
\end{proof}

We are now ready to conclude the proof of the theorem.
We have the following inequality
\begin{equation}
\label{eq:eqDecomp}
| \mathcal R(\hat{\Gamma}_{\beta}) - \mathcal R_{\beta}^* |\leq \Delta \mathcal R(\widetilde{\Gamma}_{\beta})
+ |\mathcal R(\hat{\Gamma}_{\beta})-\mathcal R(\widetilde{\Gamma}_{\beta})|.
\end{equation}
We deal which each terms in the r.h.s separately.
First, we have from Jensen's Inequality that
\begin{equation*}
\left(\mathbf{E}\left[\Delta \mathcal R(\widetilde{\Gamma}_{\beta})\right]\right)^{\frac{s+\lambda-\lambda s}{\lambda}}
\leq \mathbf{E}\left[\Delta\mathcal R(\widetilde{\Gamma}_{\beta})^{\frac{s+\lambda-\lambda s}{\lambda}}\right].
\end{equation*}
Hence, from Lemma~\ref{lm:decompRisk}, we deduce
\begin{equation*}
\left(\mathbf{E}\left[\Delta \mathcal R(\widetilde{\Gamma}_{\beta})\right]\right)^{\frac{s+\lambda-\lambda s}{\lambda}}
\leq C(\alpha,s)^{\frac{s+\lambda-\lambda s}{\lambda}}K^{\frac{s+\lambda-\lambda s}{\lambda}-1} 
\mathbf{E}\left[\Delta R_{\phi}(\hat{f})\right].
\end{equation*}
Moreover, from Lemma~\ref{lm:estimPhiRisk}, we have that
\begin{equation*}
\mathbf{E}\left[\Delta R_{\phi}(\hat{f})\right] \leq \inf_{f \in \mathcal{F}}\Delta R_{\phi}(f) + 
\frac{3KL}{n} + \frac{K C( B,L) \log(M_n)}{n}.
\end{equation*}
Therefore, we can write
\begin{equation}
\label{eq:eqIneq1}
\mathbf{E}\left[\Delta \mathcal{R}(\widetilde{\Gamma}_{\beta})\right] \leq C(\alpha,s) K^{1-\lambda/(s+\lambda -\lambda s)}
\left\{ \inf_{f \in \mathcal{F}}\Delta R_{\phi}(f) + \frac{3KL}{n} + \frac{K C( B,L) \log(N_n)}{n} \right\}^{\lambda/(s+\lambda -\lambda s)}.
\end{equation}
For the second term $|\mathcal R(\hat{\Gamma}_{\beta})-\mathcal R(\widetilde{\Gamma}_{\beta})|$ in~\eqref{eq:eqDecomp}, we observe that
\begin{equation*}
 	\1_{\{Y \notin \hat{\Gamma}_\beta(X) \}} - \1_{\{Y \notin \widetilde{\Gamma}_\beta(X) \}} 
	 = 
	\sum_{k =1}^{K} \1_{ \left\{ Y = k\right\} }   \1_{ \left\{k \notin \hat{\Gamma}_\beta (X)\right\} } - \sum_{k =1}^{K}  
 \1_{ \left\{ Y = k  \right\} }   \1_{ \left\{ k \notin \widetilde{\Gamma}_\beta(X)  \right\}}. 
\end{equation*}
Therefore, we can write
\begin{eqnarray*}
\mathbf{E}\left[\1_{Y \notin \hat{\Gamma}_\beta(X) \}} - \1_{\{Y \notin \widetilde{\Gamma}_\beta(X) \}} \right]
& = & \sum_{k = 1}^K \mathbf{E}\left[{p}_k(X)\left( \1_{ \left\{k \notin \hat{\Gamma}_\beta(X)\right\} } - \1_{\{k \notin \widetilde{\Gamma}_\beta(X)\}}\right)\right]\\
& = & \sum_{k = 1}^K \mathbf{E}\left[{p}_k(X)\left( \1_{ \left\{\hat{G}(\hat{f}_k(X)) > \beta\right\} } - \1_{\{\tilde{G}(\hat{f}_k(X)) > \beta\}}\right)\right].
\end{eqnarray*}
Since $0\leq {p}_k (X) \leq 1$ for all $k \in \{1,\ldots,K\}$, the last equality implies
\begin{multline*}
\left|\mathbf{R}\left(\hat{\Gamma}_{\beta}\right) - \mathbf{R}\left(\tilde{\Gamma}_{\beta}\right)\right| =  \left|\mathbf{E}\left[\1_{Y \notin \hat{\Gamma}_\beta(X) \}} - 
\1_{\{Y \notin \widetilde{\Gamma}_\beta(X) \}} \right]\right| \\ \leq 
\sum_{k = 1}^K \mathbf{E}\left[\left|\1_{ \left\{\hat{G}(\hat{f}_k(X)) > \beta\right\} } - \1_{\{\tilde{G}(\hat{f}_k(X)) > \beta\}}\right|\right] \\ \leq 
\sum_{k=1}^K \mathbf{P}\left(|\hat{G}(\hat{f}_k(X) - \widetilde{G}(\hat{f}_k(X))| \geq |\widetilde{G}(\hat{f}_k(X)) -\beta|\right).
\end{multline*}
Therefore, Lemma~\ref{lem:lemFondamental} implies
\begin{equation}
\label{eq:eqIneq2}
\left|\mathbf{R}\left(\hat{\Gamma}_{\beta}\right) - \mathbf{R}\left(\tilde{\Gamma}_{\beta}\right)\right| \leq \dfrac{C' K}{\sqrt{N}}.
\end{equation}
Injecting Eqs.~\eqref{eq:eqIneq1} and~\eqref{eq:eqIneq2} to Eq.~\eqref{eq:eqDecomp}
we conclude the proof of the theorem.

\subsection{Proof of Proposition~\ref{prp:crossVal}}

We begin with the following decomposition
\begin{equation*}
\tilde{R}^n_{\phi}(\hat{f}) - \tilde{R}^n_{\phi}(\tilde{f}) = \tilde{R}^n_{\phi}(\hat{f}) +  2(\hat{R}^n_{\phi}(\hat{f}) - \hat{R}^n_{\phi}(\tilde{f})) - 2 (\hat{R}^n_{\phi}(\hat{f}) - \hat{R}^n_{\phi}(\tilde{f}))
- \tilde{R}^n_{\phi}(\tilde{f}),
\end{equation*}
since $\hat{R}^n_{\phi}(\hat{f}) - \hat{R}^n_{\phi}(\tilde{f}) \leq 0$, we get
\begin{equation}
\label{eq:eqCrossValDecomp}
\tilde{R}^n_{\phi}(\hat{f}) - \tilde{R}^n_{\phi}(\tilde{f})  \leq   (\tilde{R}^n_{\phi}(\hat{f}) - \tilde{R}^n_{\phi}(\tilde{f})) - 2 (\hat{R}^n_{\phi}(\hat{f}) - \hat{R}^n_{\phi}(\tilde{f})).
\end{equation} 
Now, we denote by $\mathcal{C}_n  = \{(i_1/n, \ldots, i_M/n), (i_1,\ldots ,i_M) \in \{0, \ldots ,n\}\} \cap \mathcal{C}_M$,\\
and $\mathcal{F}_n = \{ f = \sum_{i = 1}^M m_i f_i, \;\; m_1,\ldots m_M) \in \mathcal{C}_n\}$.
For each $f \in {\rm conv}(\mathcal{F})$, there exists $f_n \in \mathcal{F}_n$ such that
\begin{eqnarray*}
|\tilde{R}^n_{\phi}(f)- \tilde{R}^n_{\phi}(f_n)| & \leq &  \frac{2KLBM}{n} \\
|\hat{R}^n_{\phi}(f)- \hat{R}^n_{\phi}(f_n)| & \leq &   \frac{2KLBM}{n}.
\end{eqnarray*}
Therefore, with Equation~\eqref{eq:eqCrossValDecomp}, we obtain
\begin{equation*}
\tilde{R}^n_{\phi}(\hat{f}) - \tilde{R}^n_{\phi}(\tilde{f}) \leq \frac{6LKB}{n} +  \sup_{f \in \mathcal{F}_n}  (\tilde{R}^n_{\phi}({f}) - \tilde{R}^n_{\phi}(\bar{f})) - 2 (\hat{R}^n_{\phi}({f}) - \hat{R}^n_{\phi}(\bar{f})).
\end{equation*}
Now, Similar arguments as in~\cite{DudoitvdLaan} and Lemma~\ref{lm:estimPhiRisk} yield the proposition.

\bibliographystyle{alpha}      
\bibliography{ConfSet}

\end{document}